\documentclass{article}
\usepackage{amsfonts}
\usepackage{amsmath}
\usepackage{amssymb}
\usepackage{amsthm}
\usepackage[margin=1.25in]{geometry}
\usepackage{hyperref}
\usepackage{tikz}
\usepackage{graphicx}
\usepackage{placeins}

\usetikzlibrary{decorations.markings}
\tikzstyle{vertex}=[circle, draw, fill=black, inner sep=0pt, minimum size=6pt]
\newcommand{\vertex}{\node[vertex]}

\newtheorem{theorem}{Theorem}[section]
\newtheorem{lemma}[theorem]{Lemma}

\newtheorem{proposition}[theorem]{Proposition}
\newtheorem{corollary}[theorem]{Corollary}
\newtheorem{observation}{Observation}
\newtheorem{conjecture}{Conjecture}

\newcommand{\RR}{\mathbb{R}}
\newcommand{\Sn}{\mathcal{S}_n}
\newcommand{\Sk}{\mathcal{S}_k}
\newcommand{\red}{\text{red}}

\begin{document}

\title{Competition graphs induced by permutations}

\author{Brian Nakamura\thanks{CCICADA/DIMACS, Rutgers University-New Brunswick, Piscataway, NJ, USA. [bnaka@dimacs.rutgers.edu]} \; and Elizabeth Yang\thanks{Princeton University, Princeton, NJ, USA. [eyang@princeton.edu]}}

\date{}

\maketitle

%%% ABSTRACT %%%
\begin{abstract}
	In prior work, Cho and Kim studied competition graphs arising from doubly partial orders. In this article, we consider a related problem where competition graphs are instead induced by permutations. We first show that this approach produces the same class of competition graphs as the doubly partial order. In addition, we observe that the $123$ and $132$ patterns in a permutation induce the edges in the associated competition graph. We classify the competition graphs arising from $132$-avoiding permutations and show that those graphs must avoid an induced path graph of length $3$. Finally, we consider the weighted competition graph of permutations and give some initial enumerative and structural results in that setting.
\end{abstract}

\medskip

%%% SECTION %%%
\section{Introduction}\label{intro}

Given a digraph $D = (V,A)$, the \emph{competition graph} $G = C(D)$ of $D$ is the undirected graph that has the same vertex set as $D$ and has edge $xy$ if and only if there exists a vertex $u \in V$ such that both the arcs $(x,u)$ and $(y,u)$ are in $D$. Competition graphs were first introduced by Cohen \cite{cohen} as a way to study food webs in ecology, where vertices represented different species in an ecosystem and a directed edge (in the digraph) existed from species A to species B if A \emph{preyed} on B. In this context, competition graphs are undirected graphs where an edge exists if two species feed on the same prey (i.e., they are in competition for resources). 

One active research effort has been to study competition graphs arising from interesting families of digraphs. For example, competition graphs of acyclic digraphs were studied in~\cite{duttbrig,robste} while those of Hamiltonian digraphs were studied in~\cite{FLMMP,guichard}. More recently, Cho and Kim \cite{ChoKim} studied competition graphs arising from doubly partial orders, which was a problem posed to them by Roberts as a means of extending results in~\cite{kimrob}. 

Let $S$ be a finite subset of $\RR^{2}$. Cho and Kim define a relation $\prec$ as a \emph{doubly partial order} on $S$ if $(x,y) \prec (z,w)$ whenever $x < z$ and $y < w$ for $(x,y), (z,w) \in S$. The set $S$ induces a digraph $D = (V,A)$ (under this relation) by letting the points in $S$ become the vertices of $D$ and arc $(u,v) \in A$ if and only if $v \prec u$ (in the doubly partial order on $S$). We will refer to $D$ as a doubly partial order. Cho and Kim \cite{ChoKim} showed that the competition graphs of these doubly partial orders are interval graphs, which are intersection graphs of a set of intervals on the real line. They also showed that any interval graph, with sufficiently many isolated vertices, is the competition graph of some doubly partial order.

In this article, we will consider competition graphs induced by certain families of this doubly partial order and will show why this is a natural consideration. Let $\pi = \pi_{1} \ldots \pi_{n} \in \Sn$ be a permutation of length $n$ in one-line notation. Given a finite sequence $s_{1} s_{2} \ldots s_{n}$ of distinct real numbers, we define the \emph{reduction} of this sequence, denoted by $\red(s_{1} \ldots s_{n})$, to be the permutation $\pi_{1} \ldots \pi_{n} \in \Sn$ that is order-isomorphic to the sequence (that is, $\pi_{i} < \pi_{j}$ if and only if $s_{i} < s_{j}$ for every $1 \leq i,j \leq n$).

We say that the permutation $\pi \in \Sn$ \emph{contains} the (permutation) pattern $\tau \in \Sk$ if there exists some $1 \leq i_{1} < i_{2} < \ldots < i_{k} \leq n$ such that $\red(\pi_{i_{1}} \pi_{i_{2}} \ldots \pi_{i_{k}}) = \tau$ (i.e., the subsequence $\pi_{i_{1}} \ldots \pi_{i_{k}}$ and $\tau$ are order-isomorphic).  Such a subsequence will be referred to as an \emph{occurrence} of $\tau$. We say that permutation $\pi$ \emph{avoids} the pattern $\tau$ if $\pi$ does not contain $\tau$. For example, the permutation $\pi = 53412$ avoids the pattern $\tau = 123$, while the permutation $\pi^{\prime} = 52134$ contains two occurrences of $\tau$ (given by the $2 3 4$ and $1 3 4$ subsequences). The set of length $n$ permutations avoiding the pattern $\tau$ is denoted by $\Sn(\tau)$. Additionally, the number of length $n$ permutations avoiding $\tau$ is denoted by $s_{n}(\tau) := | \Sn(\tau) |$. The patterns $\sigma$ and $\tau$ are said to be \emph{Wilf-equivalent} if $s_{n}(\sigma) = s_{n}(\tau)$ for every $n$.

The study of patterns in permutations gained interest after Knuth used the notion to describe permutations that are stack-sortable \cite{knuth}. For patterns of length $2$, it is (trivially) known that $s_{n}(\tau) = 1$ for all $n$. For each pattern $\tau$ of length $3$, it is known that $s_{n}(\tau) = \frac{1}{n+1}{2 n \choose n}$ (the Catalan numbers). In this case, the patterns $1 2 3$ and $3 2 1$ are ``trivially'' Wilf-equivalent, while the patterns $1 3 2$, $2 1 3$, $2 3 1$, and $3 1 2$ are trivially Wilf-equivalent to one another. The patterns $1 2 3$ and $1 3 2$ are Wilf-equivalent for non-trivial reasons. For further background, the reader is directed to \cite{kitaev:book, kitman:survey, stein:survey}.

There is a natural connection between permutations and the doubly partial order described earlier. A permutation $\pi = \pi_{1} \ldots \pi_{n}$ is often visualized as the $n$ points $\{ (i, \pi_{i}) \}_{1 \leq i \leq n}$ in a grid or in $\RR^{2}$. For example, the permutation $461532$ is shown in Figure~\ref{fig461532}.\\

\begin{figure}[h!]
	\[\begin{tikzpicture}[scale=0.75]
		\node (p1) at (0.5,3.5) [circle,fill=black] {};
		\node (p2) at (1.5,5.5) [circle,fill=black] {};
		\node (p3) at (2.5,0.5) [circle,fill=black] {};
		\node (p4) at (3.5,4.5) [circle,fill=black] {};
		\node (p5) at (4.5,2.5) [circle,fill=black] {};
		\node (p6) at (5.5,1.5) [circle,fill=black] {};
	
		\draw[step=1.0,black,thin] (0,0) grid (6,6);
	
		\node (lab1) at (0.5,0) [label=below:$4$] {};
		\node (lab2) at (1.5,0) [label=below:$6$] {};
		\node (lab3) at (2.5,0) [label=below:$1$] {};
		\node (lab4) at (3.5,0) [label=below:$5$] {};
		\node (lab5) at (4.5,0) [label=below:$3$] {};
		\node (lab6) at (5.5,0) [label=below:$2$] {};
	
	\end{tikzpicture}\]
	\caption{Visualization of permutation $461532$}\label{fig461532}
\end{figure}
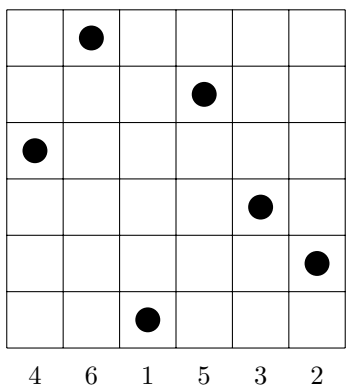

We refer to the doubly partial order on these points as the \emph{doubly partial order on permutation} $\pi$. We will say that a digraph is induced from permutation $\pi$ when it is the induced digraph from the doubly partial order on $\pi$. This will be denoted by $D(\pi)$. Analogously, the competition graph induced by permutation $\pi$ is the competition graph of the digraph $D(\pi)$. This will be denoted by $C(D(\pi))$ or more simply $C(\pi)$. The digraph and competition graph of permutation $461532$ is shown in Figure~\ref{fig461532CG}.\\

\begin{figure}[h!]
	\centering
	\includegraphics[width=\textwidth]{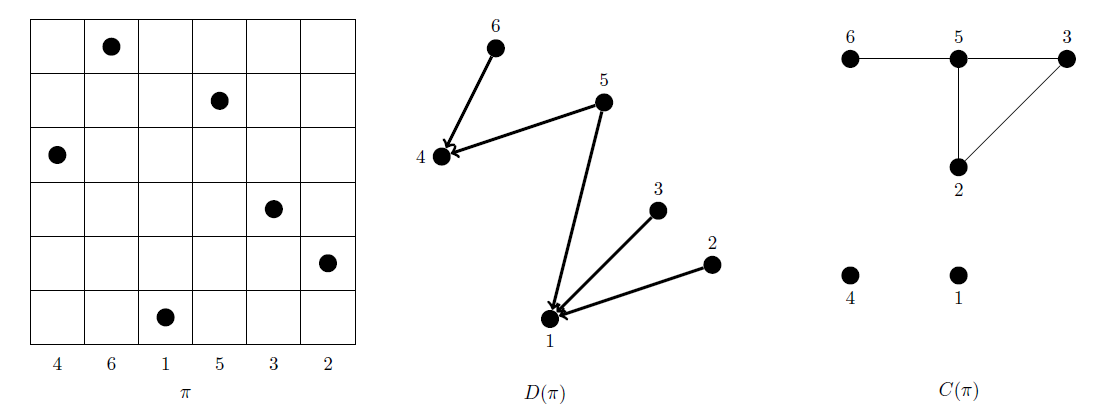}
	\caption{Digraph and competition graph of $\pi = 461532$}\label{fig461532CG}
\end{figure}

In this article, we will present results regarding competition graphs for various permutation classes. In Section~\ref{prelim}, we will present some observations and results that motivate the restriction of general finite subsets of $\RR^{2}$ to sets of points arising from permutations. In Section~\ref{compgraphs}, we will provide a characterization of competition graphs arising from permutations in $\Sn(132)$ and some partial results on competition graphs arising from $\Sn(123)$. In Section~\ref{wcompgraphs}, we will define the notion of \emph{weighted competition graphs} and present enumerative results on such graphs that have certain structures. We end with some suggestions for future work in Section~\ref{concl}.

\FloatBarrier
%%% SECTION %%%
\section{Preliminary observations and motivation}\label{prelim}

Given a permutation $\pi \in \Sn$, recall that $D(\pi)$ is the digraph induced by the doubly partial order on $\pi$ and that $C(\pi)$ is the competition graph of $D(\pi)$. For a set of permutations $T \subseteq \Sn$, we write $C(T)$ to denote the set of graphs $\{ C(\pi) : \pi \in T \}$. For example, $C(\Sn)$ is the set of all competition graphs arising from a length $n$ permutation.

Given a permutation $\pi = \pi_{1} \ldots \pi_{n}$, the competition graph $C(\pi)$ will have vertices corresponding to the points $(1, \pi_{1})$, $(2, \pi_{2})$, and so on. In general, we will refer to the vertex arising from $(i,\pi_{i})$ as simply $\pi_{i}$. When two points satisfy the doubly partial order, say $(j,\pi_{j}) \succ (i,\pi_{i})$, we will often refer to $\pi_{j}$ as the \emph{predator} (vertex) and to $\pi_{i}$ as the \emph{prey} (vertex). Equivalently, we may also say that $\pi_{j}$ \emph{preys on} $\pi_{i}$.

We now give some observations and results to motivate this new direction.

\begin{proposition}
	Let $S \subseteq \RR^{2}$ with $|S| = n$. The competition graph of the doubly partial order on $S$ is isomorphic to $C(\pi)$ for some $\pi \in \Sn$.
\end{proposition}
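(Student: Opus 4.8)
The plan is to reduce an arbitrary finite point set $S$ to a genuine permutation in two stages: first break any ties among the $x$- or $y$-coordinates \emph{without altering the doubly partial order}, and then pass from the resulting point set (now in general position) to its reduction, which is a permutation. Since the doubly partial order, hence the digraph, hence the competition graph, is determined entirely by which pairs of points are comparable and in which direction, it suffices to produce a point set with all coordinates distinct that induces exactly the same comparabilities as $S$.

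The first stage is where essentially all the content lies, so I would treat it carefully. The points $(i,\pi_i)$ coming from a permutation have distinct $x$-coordinates and distinct $y$-coordinates, whereas a general $S$ need not. The subtlety—and the main obstacle—is that ties \emph{cannot} be broken arbitrarily: two points sharing an $x$-coordinate are incomparable under $\prec$, and a careless perturbation can make one dominate the other, creating a new common-prey relation and thereby changing the competition graph. The remedy is to break ties ``against the diagonal'': among points sharing an $x$-coordinate, the one with the larger $y$-coordinate should receive the smaller perturbed $x$-coordinate (and symmetrically for $y$-ties), so that tied points stay mutually incomparable.

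Concretely, I would fix a sufficiently small $\delta > 0$ and replace each $(x_i,y_i) \in S$ by $(x_i - \delta y_i,\ y_i - \delta x_i)$ to obtain a point set $S'$. For $\delta$ smaller than the least gap between distinct coordinate values (and avoiding finitely many exceptional values, so that $S'$ has all coordinates distinct), I would check two things. First, any strict domination $(x_p,y_p) \prec (x_q,y_q)$ in $S$ survives in $S'$, since strict inequalities are preserved under a small perturbation. Second, no new domination is created: one has $x'_p < x'_q$ only if $x_p < x_q$, or $x_p = x_q$ with $y_p > y_q$, and likewise for the $y'$-coordinates, so a short case analysis shows that $x'_p < x'_q$ and $y'_p < y'_q$ together force $x_p < x_q$ and $y_p < y_q$. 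Hence $S$ and $S'$ induce the same doubly partial order and therefore the same competition graph.

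In the second stage, since $S'$ has $n$ distinct $x$-coordinates and $n$ distinct $y$-coordinates, I would sort its points by $x$-coordinate and let $\pi \in \Sn$ be the reduction of the resulting sequence of $y$-coordinates. Because $\prec$ depends only on the relative orders of the coordinates, and reduction is an order-isomorphism in each coordinate, we get $D(S') \cong D(\pi)$ and hence $C(S') \cong C(\pi)$. Chaining the two isomorphisms shows that the competition graph of the doubly partial order on $S$ is isomorphic to $C(\pi)$, as claimed. I expect the tie-breaking verification to be the crux; the reduction step is routine.
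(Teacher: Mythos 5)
Your proposal is correct and takes essentially the same approach as the paper: break coordinate ties by perturbing tied points so that they end up in a strictly decreasing (anti-diagonal) configuration, verify that this preserves the doubly partial order and hence the competition graph, and then pass to the reduction of the $y$-coordinates. The only difference is one of implementation --- you use a single global shear $(x,y)\mapsto(x-\delta y,\,y-\delta x)$ with a generic small $\delta$, whereas the paper shifts each maximal tied group separately, handling $x$-ties and then $y$-ties in turn.
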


\begin{proof}
	Let $S \subseteq \RR^{2}$ with $|S| = n$ and let $G$ be the competition graph of the doubly partial order on $S$. Without loss of generality, suppose $S = \{ (x_{1}, y_{1}), (x_{2}, y_{2}), \ldots, (x_{n},y_{n}) \}$ such that $x_{1} \leq x_{2} \leq \ldots \leq x_{n}$ and if $x_{i} = x_{j}$ (for some $i < j$) then $y_{i} < y_{j}$. If no two points in $S$ share the same $x$ or $y$ coordinates, then the permutation $\pi = \red(y_{1} y_{2} \ldots y_{n})$ will induce a competition graph that is isomorphic to $G$.
	
	Suppose that some of the points share a common $x$ coordinate. Let $(x_{k},y_{k}), (x_{k+1}, y_{k+1}), \ldots, (x_{l},y_{l})$ be the first maximal subset of points having the same $x$ coordinate ($x_{k} = x_{k+1} = \ldots = x_{l}$). Recall that $y_{k} < y_{k+1} < \ldots < y_{l}$. Let $d = x_{k}-x_{k-1}$ (if $k=1$, let $d = 1$), and let $\delta = \frac{d}{l - k + 1}$. We ``shift'' $(x_{k},y_{k}), \ldots, (x_{l},y_{l})$ to the left so that they lie on a line with negative slope:
\begin{align*}
	S^{\prime} = (S \backslash \{ (x_{k},y_{k}), (x_{k+1}, y_{k+1}), \ldots, (x_{l},y_{l}) \}) \cup \{ (x_{k + i} - i \delta,y_{k + i})\}_{0 \leq i \leq (l-k)}.
\end{align*}
This new set $S^{\prime}$ of $n$ points will have the same induced directed graph and competition graph as the original set $S$. We may repeat this procedure if there are other $x$ coordinates that multiple points share. If there are points sharing a $y$ coordinate, the same approach may be applied by slightly shifting points ``downward'' to create a decreasing slope. This guarantees that we may always produce a set of points (with distinct $x$ and $y$ coordinates) having the same induced directed graph and competition graph as set $S$.
\end{proof}

The previous proposition shows that any doubly partial order for a finite subset of $\RR^{2}$ can be thought of as a doubly partial order on a permutation (in terms of the induced digraph and competition graph). In addition, we get some immediate corollaries from the results of Cho and Kim \cite{ChoKim}.

\begin{theorem}[Cho, Kim]
	Competition graphs of a digraph of a doubly partial order are interval graphs.
\end{theorem}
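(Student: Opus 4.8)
The plan is to first reduce the statement to permutations and then exhibit an explicit interval representation of the competition graph. By the preceding proposition, every competition graph of a doubly partial order on a finite $S \subseteq \RR^{2}$ is isomorphic to $C(\pi)$ for some $\pi \in \Sn$, so it suffices to show that each $C(\pi)$ is an interval graph. Writing the vertices as the points $(i,\pi_{i})$, I would first record a clean adjacency criterion. A common prey of $\pi_{i}$ and $\pi_{j}$ must lie strictly below-left of both points, so its index is less than $\min(i,j)$ and its value is less than $\min(\pi_{i},\pi_{j})$. Assuming $i<j$ and setting $m_{i} := \min\{\pi_{k} : k < i\}$ (with $m_{1} = +\infty$), this gives the clean rule that $\pi_{i}\pi_{j}$ is an edge of $C(\pi)$ if and only if $m_{i} < \min(\pi_{i},\pi_{j})$.

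Next I would assign intervals on the real line. A vertex $\pi_{i}$ has a prey precisely when $m_{i} < \pi_{i}$; to each such vertex I assign the interval $I_{\pi_{i}} = [m_{i},\pi_{i}]$, while every prey-less vertex (those with $m_{i} > \pi_{i}$, including $\pi_{1}$) receives its own singleton interval placed far from all the others. It then remains to check that $I_{\pi_{i}} \cap I_{\pi_{j}} \neq \varnothing$ if and only if $\pi_{i}\pi_{j}$ is an edge. For $i<j$ with both vertices having a prey, the intervals meet exactly when $m_{i} \le \pi_{j}$ and $m_{j} \le \pi_{i}$. The crucial observation is that the second inequality is automatic: since position $i$ lies to the left of position $j$, the value $\pi_{i}$ is one of the entries minimized in $m_{j}$, so $m_{j} \le \pi_{i}$ always holds. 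Hence the intersection test collapses to $m_{i} \le \pi_{j}$, i.e.\ $m_{i} < \pi_{j}$ by distinctness of values; and because $m_{i} < \pi_{i}$ holds automatically for a vertex with a prey, this is exactly the adjacency criterion $m_{i} < \min(\pi_{i},\pi_{j})$. Prey-less vertices are correctly isolated, since any shared prey would in particular be a prey of each endpoint.

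The step I expect to be the main obstacle is choosing the interval endpoints so that the two-sided interval-intersection test reduces to the single inequality that governs adjacency; the left endpoints $m_{i}$ are engineered precisely so that the ``backward'' inequality $m_{j} \le \pi_{i}$ comes for free whenever $i<j$. A secondary point requiring care is the bookkeeping for vertices with no prey and for points sharing a coordinate, but the former are disposed of by disjoint singletons and the latter are eliminated at the outset by the reduction to permutations. Once the representation is verified, $C(\pi)$ is realized as the intersection graph of a family of intervals and is therefore an interval graph, which completes the proof.
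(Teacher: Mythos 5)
Your proof is correct, but it is worth noting that the paper itself does not prove this statement at all: it is imported verbatim from Cho and Kim \cite{ChoKim}, and the only original content nearby is the reduction of arbitrary finite point sets to permutations, which you also invoke. What you have written is therefore a self-contained replacement for the citation. Your construction checks out: with $m_{i} = \min\{\pi_{k} : k < i\}$, the adjacency criterion ``$\pi_{i}\pi_{j}$ is an edge iff $m_{\min(i,j)} < \min(\pi_{i},\pi_{j})$'' is exactly right, and the key observation that $m_{j} \le \pi_{i}$ holds automatically for $i < j$ (because $\pi_{i}$ is among the values minimized) is what makes the two-sided intersection test for $[m_{i},\pi_{i}]$ and $[m_{j},\pi_{j}]$ collapse to the single inequality $m_{i} < \pi_{j}$, which by distinctness of values matches adjacency precisely; prey-less vertices (those with $m_{i} > \pi_{i}$, including $\pi_{1}$) are genuinely isolated in $C(\pi)$ and are correctly handled by disjoint singletons. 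The one thing your route buys beyond the citation is an explicit, easily computable interval representation of $C(\pi)$ read directly off the permutation, which could be reused elsewhere in the paper (e.g., when analyzing the structure of $C(\Sn(132))$); the cost is that you prove the theorem only up to the reduction proposition, so your argument covers general doubly partial orders only via that isomorphism rather than directly, whereas Cho and Kim's original argument works on arbitrary finite subsets of $\RR^{2}$.
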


\begin{corollary}
	For each $\pi \in \Sn$, $C(\pi)$ is an interval graph.
\end{corollary}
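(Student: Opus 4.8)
The plan is to obtain the corollary as a direct specialization of the Cho--Kim theorem, with essentially no work beyond unwinding the definitions. First I would recall that, by construction, $C(\pi)$ is precisely $C(D(\pi))$, the competition graph of the digraph $D(\pi)$, and that $D(\pi)$ is in turn the digraph induced by the doubly partial order on the point set $\{ (i,\pi_{i}) : 1 \leq i \leq n \} \subseteq \RR^{2}$.

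The key observation is then that this point set is a finite subset of $\RR^{2}$ and that the arc relation defining $D(\pi)$ — namely, an arc from $\pi_{j}$ to $\pi_{i}$ exactly when $(i,\pi_{i}) \prec (j,\pi_{j})$ — is by definition the doubly partial order on that set. Hence $D(\pi)$ is nothing other than the digraph of a doubly partial order on a finite point set, and the Cho--Kim theorem applies verbatim to conclude that $C(D(\pi)) = C(\pi)$ is an interval graph.

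There is no genuine obstacle here, since the permutation construction was set up precisely to be a special case of the doubly partial order framework; the only point worth confirming is that no degeneracy arises. The points $(i,\pi_{i})$ have pairwise distinct $x$-coordinates (the indices $i$) and pairwise distinct $y$-coordinates (the values $\pi_{i}$), so the induced relation is exactly the strict doubly partial order studied by Cho and Kim, and none of the tie-breaking shifts used in the preceding proposition are needed. The corollary therefore follows immediately.
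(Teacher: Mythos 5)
Your proof is correct and matches the paper's (implicit) reasoning exactly: the paper states this as an immediate corollary of the Cho--Kim theorem, since $D(\pi)$ is by definition the digraph of a doubly partial order on the finite point set $\{(i,\pi_i)\}$. Your additional remark that the coordinates are automatically distinct, so no degeneracy handling is needed, is a correct and harmless clarification.
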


\begin{theorem}[Cho, Kim]
	An interval graph with sufficiently many isolated vertices is the competition graph of a doubly partial order.
\end{theorem}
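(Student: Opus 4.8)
The plan is to realize a given interval graph $G$ as the competition graph of a carefully placed finite point set, at the cost of one extra isolated vertex per maximal clique. The starting point is the classical Fulkerson--Gross characterization of interval graphs: the maximal cliques $Q_{1}, \ldots, Q_{m}$ of $G$ can be linearly ordered so that, for every vertex $v$, the set of indices $j$ with $v \in Q_{j}$ is a contiguous block $[\alpha_{v}, \beta_{v}]$. Under this ordering the assignment $v \mapsto [\alpha_{v}, \beta_{v}]$ is itself an interval representation of $G$, so that $uv \in E(G)$ if and only if $[\alpha_{u}, \beta_{u}] \cap [\alpha_{v}, \beta_{v}] \neq \emptyset$. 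This discrete representation is what makes a finite point construction possible.

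The construction I would use places two kinds of points in $\RR^{2}$. For each clique index $j$ I would put a \emph{prey point} $P_{j} = (j, -j)$; these lie on a line of negative slope and hence form an antichain in the doubly partial order. For each vertex $v$ of $G$ I would put a \emph{predator point} at $(\beta_{v} + \tfrac{1}{2}, \, -\alpha_{v} + \tfrac{1}{2})$. A direct check of the inequalities shows that the predator $v$ dominates $P_{j}$ exactly when $\alpha_{v} \leq j \leq \beta_{v}$, that is, on precisely the cliques containing $v$. Consequently two predators $u, v$ share the common prey $P_{j}$ if and only if both lie in $Q_{j}$, so the prey points alone already generate every edge of $G$.

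The crux, and the step I expect to demand the most care, is ruling out \emph{spurious} edges, since the competition graph is formed over all points simultaneously. Two checks are needed. First, a predator may dominate another predator: one computes that predator $u$ dominates predator $v$ precisely when $[\alpha_{v}, \beta_{v}] \subsetneq [\alpha_{u}, \beta_{u}]$. If such a $v$ served as a common prey for predators $x, y$, then both $[\alpha_{x}, \beta_{x}]$ and $[\alpha_{y}, \beta_{y}]$ would strictly contain $[\alpha_{v}, \beta_{v}]$ and would therefore overlap, so $xy$ is \emph{already} an edge of $G$ and nothing new is created. Second, each prey point $P_{j}$ dominates nothing at all: it dominates no other $P_{k}$ because the prey points form an antichain, and it dominates no predator $v$ because that would force $\beta_{v} < j$ and $\alpha_{v} > j$ simultaneously, contradicting $\alpha_{v} \leq \beta_{v}$. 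Hence no competition edge is incident to any $P_{j}$. Together these verifications show that the competition edge set equals $E(G)$ on the predator vertices while all $m$ prey points are isolated.

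Finally, to upgrade ``exactly $m$ isolated vertices'' to ``sufficiently many,'' I would observe that additional isolated vertices can be appended for free: placing further points of the form $(-i, M + i)$ for a large constant $M$ makes each such point incomparable to every other point in the set, so it dominates nothing and is dominated by nothing, contributing one more isolated vertex without disturbing the rest of the graph. Thus $G$ together with any number of isolated vertices that is at least $m$ is the competition graph of a doubly partial order, which is the desired conclusion.
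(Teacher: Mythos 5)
First, a point of reference: the paper does not prove this statement at all --- it is quoted from Cho and Kim as imported background --- so there is no internal proof to measure your argument against. Taken on its own terms, your construction is a clean, self-contained route: the Fulkerson--Gross consecutive-clique ordering, prey points on an antichain of negative slope indexed by the maximal cliques, and a predator point encoding each vertex's clique interval $[\alpha_{v},\beta_{v}]$. Your two ``no spurious edges'' checks are exactly what the construction needs, and both computations go through: a prey point dominates nothing, and a predator dominated by two others forces their intervals to overlap, so it can only witness an edge already in $G$. (One cosmetic slip: predator $u$ dominates predator $v$ iff $\alpha_{u} < \alpha_{v}$ and $\beta_{v} < \beta_{u}$, i.e.\ strict containment at \emph{both} endpoints, which is not equivalent to $[\alpha_{v},\beta_{v}] \subsetneq [\alpha_{u},\beta_{u}]$; the implication you actually use still holds.)

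There is, however, one genuine defect as written: distinct vertices of $G$ can have identical clique intervals --- any pair of true twins, and already $G = K_{2}$, where both vertices get the interval $[1,1]$ --- and your rule then sends all of them to the \emph{same} point $(\beta_{v}+\tfrac{1}{2},\,-\alpha_{v}+\tfrac{1}{2})$. The resulting set does not even contain one point per vertex, so for such graphs the construction fails outright rather than merely losing an edge. The repair is routine, and is in fact the same device the paper itself uses in its Proposition 2.1: spread the colliding predator points along a short segment of negative slope centered at $(\beta_{v}+\tfrac{1}{2},\,-\alpha_{v}+\tfrac{1}{2})$. They remain pairwise incomparable and have exactly the same comparabilities with every other point, so all domination relations, and hence the competition graph, are unchanged except that each vertex now owns its own point. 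With that one sentence added, your argument is complete.
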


\begin{corollary}
	An interval graph with sufficiently many isolated vertices is the competition graph $C(\pi)$ of a doubly partial order of some permutation $\pi$.
\end{corollary}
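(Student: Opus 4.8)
The plan is to obtain this corollary by composing the Cho--Kim theorem stated immediately above with the Proposition proved at the start of this section. Both ingredients are already in hand, so no new construction is required; the work is simply to chain the two statements together and to verify that the relevant properties survive each step.

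First I would take an arbitrary interval graph $G$ having sufficiently many isolated vertices (in the sense required by the Cho--Kim theorem). By that theorem, there exists a finite set $S \subseteq \RR^{2}$ such that $G$ \emph{equals} the competition graph of the doubly partial order on $S$. Writing $|S| = n$, I would then invoke the Proposition with this particular $S$: it guarantees a permutation $\pi \in \Sn$ for which the competition graph of the doubly partial order on $S$ is \emph{isomorphic} to $C(\pi)$. Chaining these two relations (equality, then isomorphism) yields $G \cong C(\pi)$, which is exactly the claim. Since the reduction used in the Proposition neither adds nor removes points, $\pi$ has length $n = |S|$, keeping the correspondence transparent.

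The only point requiring care --- and the closest thing to an obstacle --- is the bookkeeping around the phrase ``sufficiently many isolated vertices.'' I would emphasize that this hypothesis is consumed \emph{entirely} by the Cho--Kim theorem in producing $S$; the subsequent passage from $S$ to $\pi$ is a graph isomorphism, and graph isomorphism preserves the number of isolated vertices. Hence no extra isolated vertices need be introduced or tracked in the second step, and the quantitative condition needed is precisely the one already supplied by Cho and Kim. Because the proof is a direct composition of two results available to us, I expect it to be short, with the main subtlety being purely this verification that the isolated-vertex count and the vertex set are faithfully transported across the isomorphism.
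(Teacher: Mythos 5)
Your proposal is correct and is exactly the argument the paper intends: the corollary is stated without proof precisely because it follows by composing the Cho--Kim theorem with the preceding Proposition, just as you describe. Your added remark that the isomorphism from the Proposition preserves isolated-vertex counts is a reasonable (if minor) point of care that the paper leaves implicit.
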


We also make some observations on how structure within a permutation leads to structure within the competition graph. The following observation follows directly from the definition of the doubly partial order on permutations.

\begin{observation}
	Given a permutation $\pi = \pi_{1} \ldots \pi_{n}$, there is an arc $(\pi_{j}, \pi_{i})$ in digraph $D(\pi)$ if and only if $\pi_{i}$ and $\pi_{j}$ form a $1 2$ pattern (that is, $i < j$ and $\pi_{i} < \pi_{j}$).
\end{observation}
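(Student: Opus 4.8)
The plan is to prove the stated equivalence by simply unwinding the three definitions that feed into the notion of an arc in $D(\pi)$, keeping careful track of the direction of the arc. First I would recall that, by construction, $D(\pi)$ is the digraph induced by the doubly partial order on the point set $\{(i,\pi_{i})\}_{1 \leq i \leq n}$, and that the vertex labeled $\pi_{i}$ is identified with the point $(i,\pi_{i})$. This identification is unambiguous precisely because the values $\pi_{1}, \ldots, \pi_{n}$ are distinct, so there is no loss in labeling each vertex by its value rather than its position.

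Next I would translate the arc $(\pi_{j}, \pi_{i})$ back into the language of points. By the definition of the induced digraph, an arc $(u,v)$ is present exactly when $v \prec u$; taking $u = (j,\pi_{j})$ and $v = (i,\pi_{i})$, the arc $(\pi_{j}, \pi_{i})$ is present if and only if $(i,\pi_{i}) \prec (j,\pi_{j})$. The one point requiring care is the orientation convention: the source $\pi_{j}$ of the arc must be the $\succ$-larger point (the predator) and the target $\pi_{i}$ the $\succ$-smaller point (the prey), so I would check this against the definition $(u,v) \in A \iff v \prec u$ to be sure the inequality is not accidentally reversed.

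Finally, I would apply the definition of the doubly partial order directly: $(i,\pi_{i}) \prec (j,\pi_{j})$ holds if and only if $i < j$ and $\pi_{i} < \pi_{j}$. I would then observe that the condition ``$i < j$ and $\pi_{i} < \pi_{j}$'' is, word for word, the statement that the subsequence at positions $i$ and $j$ reduces to the pattern $12$, i.e.\ that $\pi_{i}$ and $\pi_{j}$ form a $12$ pattern. Chaining these biconditionals yields both directions of the claim simultaneously.

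Since every step is itself a logical equivalence coming straight from a definition, there is no genuine obstacle here; the result is essentially a restatement. The only thing worth being deliberate about is the bookkeeping of the arc's orientation together with the value-versus-position indexing, which is why I would make the predator/prey correspondence explicit rather than leave it implicit.
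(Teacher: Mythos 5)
Your proposal is correct and is exactly the definition-unwinding the paper has in mind: the paper offers no written proof, stating only that the observation ``follows directly from the definition of the doubly partial order on permutations,'' and your chain of biconditionals (arc $(\pi_j,\pi_i)$ iff $(i,\pi_i)\prec(j,\pi_j)$ iff $i<j$ and $\pi_i<\pi_j$ iff a $12$ pattern) spells out precisely that. Your attention to the arc orientation under the convention $(u,v)\in A \iff v \prec u$ is the right detail to be careful about, and it checks out.
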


\noindent Edges in the competition graph also have a nice relation to structure within the permutation.

\begin{proposition}\label{propCGedge}
	Given a permutation $\pi = \pi_{1} \ldots \pi_{n}$, there is an edge $\{ \pi_{i}, \pi_{j} \}$ in the competition graph $C(\pi)$ if and only if $\pi_{i}$ and $\pi_{j}$ are the ``$2$'' and ``$3$'' terms in either a $1 2 3$ pattern or a $1 3 2$ pattern.
\end{proposition}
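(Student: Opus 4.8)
The plan is to translate the statement entirely into the arc language of the preceding Observation and then unwind the definition of a competition-graph edge. Fix two vertices $\pi_{i},\pi_{j}$ and assume without loss of generality that $i<j$. By definition, $\{\pi_{i},\pi_{j}\}$ is an edge of $C(\pi)$ exactly when the two predators share a common prey, i.e.\ when there is a vertex $\pi_{k}$ with both arcs $(\pi_{i},\pi_{k})$ and $(\pi_{j},\pi_{k})$ present in $D(\pi)$. Invoking the Observation, each such arc is equivalent to a $12$ pattern, so the edge condition unwinds to the existence of an index $k$ with $k<i$, $k<j$, $\pi_{k}<\pi_{i}$, and $\pi_{k}<\pi_{j}$. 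Since $i<j$, this is simply the existence of some $k<i$ with $\pi_{k}<\min(\pi_{i},\pi_{j})$; equivalently, $\pi_{k}$ sits strictly below and strictly to the left of both vertices.

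For the forward direction I would start from such a common prey $\pi_{k}$ and examine the triple $\pi_{k}\pi_{i}\pi_{j}$ occurring at positions $k<i<j$. Because $\pi_{k}$ is the smallest of the three values, $\pi_{k}$ plays the role of the ``$1$''. A case split on the order of the two larger values then finishes it: if $\pi_{i}<\pi_{j}$ then $\red(\pi_{k}\pi_{i}\pi_{j})=123$ and $\pi_{i},\pi_{j}$ are its ``$2$'' and ``$3$'' terms, while if $\pi_{i}>\pi_{j}$ then $\red(\pi_{k}\pi_{i}\pi_{j})=132$ and $\pi_{i},\pi_{j}$ are its ``$3$'' and ``$2$'' terms. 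Either way, $\{\pi_{i},\pi_{j}\}$ is the pair of ``$2$''/``$3$'' entries of a $123$ or $132$ occurrence.

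For the converse I would run the same computation backward. Suppose $\pi_{i},\pi_{j}$ are the ``$2$'' and ``$3$'' entries of a $123$ or $132$ occurrence at positions $a<b<c$. In both patterns the ``$1$'' entry $\pi_{a}$ lies to the left of the other two positions and is smaller in value than both of them, so $\pi_{a}$ is a common prey of $\pi_{i}$ and $\pi_{j}$ by the Observation, which yields the edge.

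The only point demanding care is the pattern bookkeeping in the $132$ case: there the ``$3$'' occupies the middle position while the ``$2$'' occupies the last position, so the positions of the two entries are not ordered in the same way as their values. One must therefore check that, regardless of which of $\pi_{i},\pi_{j}$ serves as the ``$2$'' versus the ``$3$'', the decisive inequalities (both entries lie above and to the right of the ``$1$'') still hold. This is precisely the symmetry in $\pi_{i}$ and $\pi_{j}$ that is built into the common-prey condition, so once the edge condition is written in the symmetric form above, no genuine obstacle remains; the argument is essentially a clean two-way translation through the Observation.
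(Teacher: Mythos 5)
Your proposal is correct and follows essentially the same route as the paper: unwind the edge condition into the existence of a common prey $\pi_{k}$ lying to the left of and below both $\pi_{i}$ and $\pi_{j}$, and observe that the triple $\pi_{k}\pi_{i}\pi_{j}$ is then exactly an occurrence of $123$ or $132$ with $\pi_{i},\pi_{j}$ as the ``$2$'' and ``$3$'' entries. The paper's proof is a one-line version of this same two-way translation.
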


\begin{proof}
	The edge $\{ \pi_{i}, \pi_{j} \}$ is in the competition graph $C(\pi)$ if and only if there exists a $\pi_{k}$ such that $k < i, j$ and $\pi_{k} < \pi_{i}, \pi_{j}$, in which case the subsequence $\pi_{k} \pi_{i} \pi_{j}$ forms either a $1 2 3$ or a $1 3 2$ pattern in $\pi$.
\end{proof}

Also observe that if we considered the competition graph $C(\pi)$ with multiple edges allowed, there is a one-to-one correspondence between the edges in the graph and occurrences of $1 2 3$ and $1 3 2$ patterns in $\pi$. Such graphs will be considered in Section~\ref{wcompgraphs}.

\FloatBarrier
%%% SECTION %%%
\section{Competition graphs of permutations}\label{compgraphs}

In the previous section, we showed that the set of directed graphs and competition graphs induced by permutations (i.e., $D(\Sn)$ and $C(\Sn)$) is equivalent to the set of such graphs induced by the doubly partial order on $\RR^{2}$ studied by Cho and Kim. Since edges in the competition graph correspond strictly to $123$ and $132$ patterns in the permutation, it is natural to consider restricting permutations to those that avoid one of the patterns. In this section, we will study competition graphs arising from permutations in $\Sn(123)$ and $\Sn(132)$. By Proposition~\ref{propCGedge}, edges in graphs from $C(\Sn(123))$ (resp.~$C(\Sn(132))$) correspond to occurrences of the pattern $1 3 2$ (resp.~pattern $1 2 3$). It should be emphasized that we will not consider multiple edges in the competition graphs in this section. The main result of this section is a forbidden subgraph characterization of graphs in $C(\Sn(132))$ along with some related observations for the $C(\Sn(123))$ case.

%%% SUBSECTION
\subsection{Competition graphs of $\Sn(123)$ and $\Sn(132)$}

We first observe that for each $n$, there is a competition graph in $C(\Sn(123))$ and $C(\Sn(132))$ that contains $K_{n-1}$, the complete graph on $n-1$ vertices, as a subgraph.

\begin{proposition}
	Let the $n$-vertex graph $H$ be a copy of $K_{n-1}$ together with an isolated vertex. Then, $H$ is isomorphic to a graph in $C(\Sn(123))$ as well as a graph in $C(\Sn(132))$.
\end{proposition}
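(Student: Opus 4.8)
The plan is to exhibit, for each $n$, a specific permutation in $\Sn(123)$ whose competition graph is $K_{n-1}$ plus an isolated vertex, and likewise a specific permutation in $\Sn(132)$. By Proposition~\ref{propCGedge}, edges in $C(\pi)$ correspond to the ``$2$'' and ``$3$'' positions of $123$ or $132$ patterns, so in $\Sn(123)$ every edge comes from a $132$ pattern and in $\Sn(132)$ every edge comes from a $123$ pattern. The key is that a single small element placed at the front can serve as the common prey (the ``$1$'') for all pairs formed by the remaining elements, turning them into a clique.

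For the $\Sn(132)$ case, I would take $\pi = 1\,2\,3\,\cdots\,n$, the increasing permutation. This clearly avoids $132$ (indeed it avoids every non-identity pattern of length $3$). The element $\pi_1 = 1$ is smaller than and precedes every other element, so for any pair $\pi_i, \pi_j$ with $1 < i < j$ the subsequence $1\,\pi_i\,\pi_j$ is a $123$ pattern; hence every pair among $\{\pi_2,\ldots,\pi_n\}$ forms an edge, giving $K_{n-1}$ on those vertices. The vertex $\pi_1 = 1$ is never the ``$2$'' or ``$3$'' of any pattern, so it is isolated, yielding $K_{n-1}$ plus an isolated vertex as desired.

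For the $\Sn(123)$ case I would use $\pi = 1\,n\,(n-1)\,\cdots\,3\,2$, that is, $\pi_1 = 1$ followed by $n, n-1, \ldots, 2$ in decreasing order. This avoids $123$: any ascent must begin at $\pi_1 = 1$, but after $\pi_1$ the sequence is strictly decreasing, so no three increasing terms can be chosen. As before, $\pi_1 = 1$ precedes and lies below every later element, so for any $1 < i < j$ the subsequence $\pi_1\,\pi_i\,\pi_j$ has $\pi_1$ smallest and $\pi_i > \pi_j$, forming a $132$ pattern; thus every pair in $\{\pi_2,\ldots,\pi_n\}$ is an edge and these vertices form $K_{n-1}$, while $\pi_1$ is again isolated.

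The argument is essentially a direct construction plus a verification, so there is no serious obstacle; the only point requiring a little care is confirming that the leading ``$1$'' is genuinely isolated in both graphs — i.e.\ that it can only ever play the role of the ``$1$'' in a pattern, never the ``$2$'' or ``$3$'' — which follows immediately since being the minimal and first element it can never be a predator. I would state the two constructions, check pattern-avoidance in one line each, and then invoke Proposition~\ref{propCGedge} to read off the edge set.
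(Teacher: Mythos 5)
Your proposal is correct and uses exactly the same two witness permutations as the paper ($1\,2\cdots n$ for the $132$-avoiding case and $1\,n\,(n-1)\cdots 2$ for the $123$-avoiding case); the paper simply states these without the verification you spell out. No further comment is needed.
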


\begin{proof}
	The permutations $1 2 \ldots n \in \Sn(132)$ and $1 n (n-1) (n-2) \ldots 2 \in \Sn(123)$ produce the desired competition graphs.
\end{proof}

Recall that $s_{n}(123) = s_{n}(132)$ for all $n$. Even though $\Sn(123)$ and $\Sn(132)$ are in one-to-one correspondence (and multiple bijections are known), the sets of competition graphs $C(\Sn(123))$ and $C(\Sn(132))$ are not ``equivalent.'' For $n \leq 6$, it can be computationally verified that there exists a bijection from $C(\Sn(123))$ to $C(\Sn(132))$ mapping graphs in the first set to isomorphic graphs in the second set. For $n \geq 7$, there are graphs in $C(\Sn(123))$ (resp. $C(\Sn(132))$) that are not isomorphic to any graphs in $C(\Sn(132))$ (resp. $C(\Sn(123))$). 

The two graph structures of interest are stars and paths. Let $K_{1,m}$ denote the complete bipartite graph on $m+1$ vertices with bipartitions of size $1$ and $m$. We will often refer to this as a star. Also, let $P_{m}$ denote the path graph on $m+1$ vertices.

\begin{lemma}\label{lembasecase7}
	Let $K^{\prime}_{1,3}$ be the graph with $7$ vertices formed by a copy of $K_{1,3}$ and $3$ isolated vertices, and let $P^{\prime}_{3}$ be the graph on $7$ vertices formed by a copy of $P_{3}$ and $3$ isolated vertices.
	\begin{enumerate}
		\item[(i)] There exists no $\pi \in \mathcal{S}_{7}(123)$ such that $C(\pi) \cong K^{\prime}_{1,3}$. Additionally, $K^{\prime}_{1,3}$ is the only graph in $C(\mathcal{S}_{7}) \backslash C(\mathcal{S}_{7}(123))$.
		\item[(ii)] There exists no $\pi \in \mathcal{S}_{7}(132)$ such that $C(\pi) \cong P^{\prime}_{3}$. Additionally, $P^{\prime}_{3}$ is the only graph in $C(\mathcal{S}_{7}) \backslash C(\mathcal{S}_{7}(132))$.
	\end{enumerate}
\end{lemma}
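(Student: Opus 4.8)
My plan rests on the \emph{edge clique cover} description of competition graphs: for any permutation $\pi$, the edge $\{\pi_i,\pi_j\}$ lies in $C(\pi)$ exactly when $\pi_i,\pi_j$ share a common prey, so grouping the predators of each fixed prey shows that $C(\pi)$ is the union of the cliques $N^-(v)=\{u: u \text{ preys on } v\}$ over all vertices $v$. I would combine this with the \emph{directionality} coming from Proposition~\ref{propCGedge}: in $\mathcal{S}_n(123)$ every edge arises from a $132$ pattern, so its earlier endpoint has the larger value, while in $\mathcal{S}_n(132)$ every edge arises from a $123$ pattern, so its earlier endpoint has the smaller value. For each part I would first establish the stronger, $n$-free statement that the offending graph cannot occur as an \emph{induced} subgraph, and then reduce the uniqueness clause to a finite verification.

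For (i) the goal is that $C(\pi)$ is claw-free for every $\pi\in\mathcal{S}_n(123)$, which rules out $K'_{1,3}$ since it contains an induced $K_{1,3}$. Suppose a vertex $c$ had three pairwise non-adjacent neighbors $l_1,l_2,l_3$. Each edge $\{c,l_i\}$ needs a common prey $p_i$; the $p_i$ are distinct (a prey shared by two leaves would join them), and each satisfies $p_i<c$, $p_i<l_i$, with $p_i$ left of both $c$ and $l_i$. Using that edges of a $123$-avoider point downward, each leaf is \emph{high} ($l_i>c$, hence left of $c$) or \emph{low} ($l_i<c$, hence right of $c$). The crux is a short shared-prey check: for two high leaves the prey of the left one has value $<c<$ the other leaf, so it is shared and the leaves are adjacent; a similar computation, using the prey of both low leaves, shows two low leaves are adjacent as well. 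Pigeonhole on three leaves then forces a contradiction.

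For (ii) the goal is that $C(\pi)$ has no induced $P_3$ (the four-vertex path) for $\pi\in\mathcal{S}_n(132)$, which rules out $P'_3$. Given an induced path $a$--$b$--$c$--$d$, the clique cover forces prey $q_1,q_2,q_3$ with predator sets \emph{exactly} $N^-(q_1)=\{a,b\}$, $N^-(q_2)=\{b,c\}$, $N^-(q_3)=\{c,d\}$, since no vertex is a common neighbor of the two endpoints of any path edge, and since the $q_i$ must be distinct. Here I would use the key structural fact that in a $132$-avoider the predators of any vertex, read left to right, increase in value (an inversion among them would create a $132$). Translating the ``exactly these predators'' conditions into position/value inequalities, and using the inverse map (a symmetry of both the class and the competition graph) to merge the two surviving cases into one, I would show the relative order of the seven points collapses to that realized by $5637124$, in which $q_2,b,c$ already form a $132$ pattern -- the desired contradiction.

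Finally, the uniqueness clauses (that $K'_{1,3}$, resp.\ $P'_3$, is the \emph{only} missing graph) are the genuinely finite ingredient, and I would settle them computationally: enumerate $\mathcal{S}_7$, compute each $C(\pi)$ up to isomorphism, and verify that $C(\mathcal{S}_7)\setminus C(\mathcal{S}_7(123))$ and $C(\mathcal{S}_7)\setminus C(\mathcal{S}_7(132))$ are singletons; the non-existence results above then identify those singletons as $K'_{1,3}$ and $P'_3$. I expect the main obstacle to be the $132$ case: pinning the three predator sets down to \emph{exact} pairs and then forcing the order that exposes a $132$ requires careful bookkeeping, whereas the claw-free argument for the $123$ case is comparatively clean.
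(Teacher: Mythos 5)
Your proposal is correct in substance but takes a genuinely different route: the paper's entire proof of this lemma is the single sentence ``both of these can be verified via computer,'' whereas you give hand arguments for the two non-existence clauses and reserve the computer only for the uniqueness clauses. Your claw-free argument for part (i) is complete and attractive --- it is in effect a direct, computation-free proof of Proposition~\ref{propindsg}(i) (and works for all $n$, not just $n=7$): the high/low dichotomy for the leaves of a claw is forced by the fact that every edge of a $123$-avoider comes from a $132$ occurrence, and the shared-prey check for two leaves on the same side does go through (for two low leaves, note that $123$-avoidance forces the right leaf to be the smaller one, and then the prey of that right leaf is a common prey of both). What the paper's approach buys is uniformity and brevity; what yours buys is an explanation of \emph{why} the claw is forbidden, which the paper never supplies.

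Part (ii) is the soft spot. Your setup (distinct preys $q_1,q_2,q_3$ whose predator sets \emph{within} $\{a,b,c,d\}$ are exactly the three path edges, plus the fact that the predators of a fixed prey increase left to right in a $132$-avoider) is correct, but the claim that the relative order of the seven points ``collapses to that realized by $5637124$'' is too strong. The four permutations $5736124$, $5736142$, $5637124$, $5637142$ are only the configurations in which every edge has exactly one prey among the seven chosen terms and none of $a,b,c,d$ serves as a prey itself (cf.\ the proof of Lemma~\ref{lemPathStruct}); in general $a$ may also prey for the edge $\{b,c\}$, an edge may have weight greater than one among the seven terms, and so on, so there are strictly more admissible relative orders. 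What you actually must show is that \emph{every} admissible order contains a $132$, which is exactly the finite check. Since you already commit to enumerating $\mathcal{S}_7$ for the uniqueness clauses, that computation subsumes the gap and the proposal as a whole suffices; but as a hand proof, part (ii) would need the fuller case analysis you anticipate, not a reduction to a single canonical order.
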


\begin{proof}
	Both of these can be verified via computer. 
\end{proof}

A stronger statement can be shown for larger $n$.

\begin{proposition}
	\label{propindsg}
	Given a graph $G \in C(\Sn)$ with $n \geq 7$, the following hold:
	\begin{enumerate}
		\item[(i)] If $K_{1,3}$ is an induced subgraph of $G$, then $G \notin C(\Sn(123))$.
		\item[(ii)] If $P_{3}$ is an induced subgraph of $G$, then $G \notin C(\Sn(132))$.
	\end{enumerate}
\end{proposition}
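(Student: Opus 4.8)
The plan is to prove both statements by induction on $n$, with the base case $n = 7$ supplied by Lemma~\ref{lembasecase7} and the inductive step carried out by deleting a single point from the permutation. I will describe part (i) in detail; part (ii) is identical after replacing $K_{1,3}$ by $P_{3}$ and $123$ by $132$. Throughout I use the elementary fact, immediate from the definitions, that if $u$ is a common prey of $x$ and $y$ (that is, $u \prec x$ and $u \prec y$ in the doubly partial order on $\pi$), then $\{x,y\}$ must be an edge of $C(\pi)$; equivalently, two non-adjacent vertices of $C(\pi)$ share no common prey.

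For the inductive step, suppose $n \geq 8$ and that part (i) holds at $n-1$. Arguing by contradiction, assume some $\pi \in \Sn(123)$ has $G = C(\pi)$ containing an induced $K_{1,3}$ on vertices $\{c,a,b,d\}$, with center $c$ and pairwise non-adjacent leaves $a,b,d$. I would delete from $\pi$ a single point $v \notin \{c,a,b,d\}$ chosen so that the competition graph of the resulting permutation $\pi'$ still induces $K_{1,3}$ on $\{c,a,b,d\}$. Two observations make this possible. First, deleting a point from a $123$-avoiding permutation again yields a $123$-avoiding permutation, and deleting a point can only remove edges from a competition graph, never create them; hence the three non-edges among $a,b,d$ survive automatically, and the only danger is that some center–leaf edge disappears. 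Second, an edge $\{c,\ell\}$ is destroyed by deleting $v$ only when $v$ is the \emph{unique} common prey of $c$ and $\ell$; call such a point \emph{critical}.

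The crux is a counting bound on critical points. I claim each point $v$ can be a common prey of at most one of the three edges $\{c,a\},\{c,b\},\{c,d\}$: if $v$ were a common prey of both $\{c,a\}$ and $\{c,b\}$, then $v \prec a$ and $v \prec b$, so $v$ would be a common prey of $a$ and $b$, forcing $\{a,b\}$ to be an edge and contradicting that $a,b$ are leaves; the same reasoning applies to any two of the three edges. Consequently there are at most three critical points in all of $\pi$. Since $\{c,a,b,d\}$ occupies only four of the $n \geq 8$ points, at least $n-4 \geq 4$ points lie outside the star, so at least one of them is non-critical. Deleting it produces $\pi' \in \mathcal{S}_{n-1}(123)$ whose competition graph still induces $K_{1,3}$, so $C(\pi') \in C(\mathcal{S}_{n-1}(123))$ contains an induced $K_{1,3}$, contradicting the inductive hypothesis. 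Part (ii) is identical: writing the edges of the induced $P_{3}$ as $\{a,b\},\{b,c\},\{c,d\}$, any two of them have among their endpoints one of the non-edges $\{a,c\},\{a,d\},\{b,d\}$, so no point is a common prey of two of them, at most three points are critical, and the same deletion argument applies.

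The step I expect to be most delicate is not the inductive reduction but anchoring the base case in exactly the form needed. Lemma~\ref{lembasecase7} is phrased for $K'_{1,3}$ (the claw together with three isolated vertices), whereas the reduction only guarantees that we reach some $\pi^{*} \in \mathcal{S}_{7}(123)$ whose competition graph contains an induced claw, possibly with extra edges among the remaining vertices. To close the induction cleanly one wants the stronger statement that \emph{no} competition graph of a length-$7$ permutation in $\mathcal{S}_{7}(123)$ contains an induced $K_{1,3}$ at all. This follows from the same finite computation underlying Lemma~\ref{lembasecase7}: combining its uniqueness clause, that $K'_{1,3}$ is the only graph in $C(\mathcal{S}_{7}) \setminus C(\mathcal{S}_{7}(123))$, with the enumerated fact that $K'_{1,3}$ is the only member of $C(\mathcal{S}_{7})$ containing an induced claw, shows that every graph in $C(\mathcal{S}_{7}(123))$ is claw-free. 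I would therefore state the base case explicitly in this claw-free form and verify it directly, rather than relying on the literal wording of the Lemma, so that the reduction at $n=8$ bottoms out without a gap.
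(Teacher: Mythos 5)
Your proof is correct, and its engine is the same as the paper's: the observation that no single point can be a common prey for two of the three edges of the claw (or path), since it would then be a common prey of two non-adjacent endpoints and force a forbidden edge. Where you differ is the reduction mechanism. The paper does a one-shot extraction: pick one prey witness per edge, take the $7$-term subsequence on the four star/path vertices plus the three witnesses, observe it reduces to a $123$- (resp.\ $132$-) avoiding $\sigma \in \mathcal{S}_{7}$ producing $K^{\prime}_{1,3}$ (resp.\ $P^{\prime}_{3}$), and invoke Lemma~\ref{lembasecase7} directly. You instead induct on $n$, at each step deleting a single non-critical point outside the configuration; your bound of at most three critical points (one per edge, since no point serves two edges) guarantees such a point exists once $n \geq 8$. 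Your route is longer but buys one genuine thing: it forces you to confront, and state explicitly, that the base case needed is \emph{claw-freeness} of every graph in $C(\mathcal{S}_{7}(123))$, which is strictly stronger than the literal wording of Lemma~\ref{lembasecase7} (no $\pi$ with $C(\pi) \cong K^{\prime}_{1,3}$). The paper's proof quietly elides the same point when it asserts that the extracted $\sigma$ produces $K_{1,3}$ \emph{with three isolated vertices} --- a priori the three prey terms could participate in further edges of $C(\sigma)$ --- so your proposal to record the computational base case in the induced-subgraph form is the right fix for both arguments. Both proofs ultimately rest on that same finite verification.
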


\begin{proof}
	\begin{enumerate}
		\item[(i)] For the sake of contradiction, suppose $\pi \in \Sn(123)$ has a competition graph $G=C(\pi)$ that contains $K_{1,3}$ as an induced subgraph. Let $\pi_{a}$, $\pi_{b}$, $\pi_{c}$, and $\pi_{d}$ be the four vertices that form the $K_{1,3}$ induced subgraph. Observe that no single prey vertex can induce more than one edge in the $K_{1,3}$. To see this, suppose the prey vertex $\pi_{\alpha}$ induced two of the edges in the $K_{1,3}$. Without loss of generality, suppose those two edges are $\{ \pi_{a}, \pi_{b} \}$ and $\{ \pi_{a}, \pi_{c} \}$. Since $\alpha < a,b,c$ and $\pi_{\alpha} < \pi_{a}, \pi_{b}, \pi_{c}$, the induced subgraph should contain a triangle, which is a contradiction. Thus, each of the three edges has a distinct prey vertex (if an edge has more than one possible prey vertex, just choose one). This gives us $7$ vertices ($4$ predators and $3$ prey) and the corresponding length $7$ subsequence within $\pi$ would give us a length $7$ permutation $\sigma$ that produces $K_{1,3}$ with $3$ isolated vertices. If $\pi$ was $123$-avoiding, then $\sigma$ would also be $123$-avoiding, but by Lemma~\ref{lembasecase7}, this is impossible. Therefore, $\pi \notin \Sn(123)$ and $G \notin C(\Sn(123))$.
		\item[(ii)] The same argument as above holds by replacing $K_{1,3}$ with $P_{3}$ and $123$ with $132$.
	\end{enumerate}
\end{proof}

This leads to our main result of this section.

\begin{theorem}\label{thm132class}
	Let graph $G \in C(\Sn)$. Then, $G \in C(\Sn(132))$ if and only if $P_{3}$ is not an induced subgraph in $G$.
\end{theorem}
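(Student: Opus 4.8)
The plan is to prove the two implications separately. The ``only if'' direction is essentially Proposition~\ref{propindsg}(ii): if $P_{3}$ (the path on four vertices) is an induced subgraph of $G = C(\pi)$, then $\pi$ cannot avoid $132$, so $G \notin C(\Sn(132))$; contrapositively $G \in C(\Sn(132))$ forces $G$ to have no induced $P_{3}$. This covers $n \geq 7$, and for $n < 7$ the implication is vacuous, since an induced $P_{3}$ in a competition graph requires four predator vertices together with three pairwise-distinct prey vertices that are also distinct from the predators (by the nested-prey argument underlying Proposition~\ref{propindsg}, a shared prey would create a triangle or a chord), i.e.\ at least seven vertices. So the substance is the converse: every $G \in C(\Sn)$ with no induced $P_{3}$ is realized by a $132$-avoiding permutation.

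For the converse, I would first upgrade the structural information on $G$. By the corollary to the Cho--Kim theorem, $G$ is an interval graph, hence chordal, hence has no induced $C_{4}$. Combined with the hypothesis of no induced $P_{4}$, this makes $G$ a $\{C_{4}, P_{4}\}$-free graph, i.e.\ a \emph{trivially perfect} graph. This is the crucial point: a general $P_{4}$-free (cograph) such as $C_{4}$ itself cannot occur, precisely because it is not an interval graph, and this is exactly what rules out the ``hard'' joins. Trivially perfect graphs admit a clean recursive description: they are exactly the graphs built from a single vertex by repeatedly (op1) taking disjoint unions and (op2) adding a universal vertex. My strategy is to mirror (op1) and (op2) by operations on $132$-avoiding permutations.

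The engine is the following refinement of Proposition~\ref{propCGedge}: in a $132$-avoiding permutation, $\{\pi_{i}, \pi_{j}\}$ with $i < j$ is an edge of $C(\pi)$ if and only if $\pi_{i} < \pi_{j}$ and $\pi_{i}$ is \emph{not} a left-to-right minimum (the common prey $\pi_{k}$ forces the $123$ pattern $\pi_{k}\pi_{i}\pi_{j}$). In particular both endpoints of any edge are non-left-to-right-minima, so the left-to-right minima are exactly the isolated vertices. From this I would verify: (i) for $132$-avoiding $\sigma, \rho$, the skew sum $\sigma \ominus \rho$ is $132$-avoiding, has competition graph $C(\sigma) \sqcup C(\rho)$ (no cross edge can occur, since a common prey would have to lie below the low block and to the left of the high block simultaneously), and has non-minima the union of the two, realizing (op1); and (ii) appending a new global maximum at the end of a $132$-avoiding $\tau$ keeps it $132$-avoiding, creates no new edge among old vertices, and adds one vertex adjacent to exactly the non-left-to-right-minima of $\tau$, realizing (op2). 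Carrying the invariant that every vertex of the graph built so far is a non-left-to-right-minimum (with all auxiliary vertices being isolated left-to-right minima), (op2) then correctly attaches the new universal vertex to \emph{every} vertex of the current graph, including those isolated inside it. Inducting on the trivially perfect decomposition of $G$ minus its isolated vertices produces a $132$-avoiding permutation realizing that graph plus some isolated vertices.

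The step I expect to be the main obstacle is the exact matching of the vertex count. The theorem equates $C(\Sn)$ with $C(\Sn(132))$, both on exactly $n$ vertices, so I must produce a $132$-avoiding permutation of length precisely $n$ with precisely the number of isolated vertices that $G$ has. The construction contributes one auxiliary isolated vertex per base case, i.e.\ one per leaf of the rooted forest underlying the trivially perfect graph, and the number of leaves equals the number of maximal cliques of $G$. The delicate point is to show this count does not exceed the number of isolated vertices $G$ already possesses: I would argue that in \emph{any} permutation realization, each maximal clique forces a private common prey, these preys are pairwise distinct (a shared prey would merge two maximal cliques), and they cannot be non-isolated vertices of $G$ (being below-and-left of a forest leaf), hence $G$ has at least as many isolated vertices as maximal cliques. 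Once this is in hand, the leftover isolated vertices are supplied freely by skew-summing with a decreasing permutation (empty competition graph, $132$-avoiding), bringing the length to exactly $n$. The base cases $n \le 7$, where the count is tightest, can be confirmed directly as in Lemma~\ref{lembasecase7}.
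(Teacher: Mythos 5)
Your plan is sound and takes a genuinely different route from the paper's. The paper never leaves permutation-land: it shows directly from $P_{3}$-freeness that the unique nontrivial component has a dominating vertex (Lemma~\ref{lemconncomp}, via an iterative deletion of ``redundant'' terms), partitions the permutation by connected components (Lemma~\ref{lempartition}), and reassembles with the explicit inflation $4213[\tau^{0}, \alpha^{j}[\sigma^{1},\ldots,\sigma^{j}],\tau,1]$. You instead import the Cho--Kim interval-graph corollary to conclude that $G$ is chordal, hence $C_{4}$-free, hence (with $P_{3}$-freeness) trivially perfect, and then realize the two cotree operations by skew sums and by appending a global maximum; the underlying structural fact --- a universal vertex in every connected piece, then recurse --- is the same in both arguments, but you get it for free from the trivially perfect characterization while the paper derives it by hand without ever invoking chordality. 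Your isolated-vertex accounting is also different and does go through once the flagged claims are checked: every clique of size at least two in $C(\pi)$ has a common prey (take the prey of the edge joining the leftmost and the lowest members of the clique --- it lies left of and below the whole clique), distinct maximal cliques cannot share one, and such a prey must itself be isolated (if it had a neighbor, their common prey would lie left of and below the entire clique together with that neighbor, extending the clique); so $G$ has at least one isolated vertex per maximal clique of size at least two, which is exactly one per leaf of the decomposition forest. Two small corrections that do not affect the argument: the parenthetical ``the left-to-right minima are exactly the isolated vertices'' is only true in the direction you actually use (left-to-right minima are isolated; the converse fails, e.g.\ for the permutation $12$ both terms are isolated but only one is a minimum), and ``the number of leaves equals the number of maximal cliques of $G$'' should refer to $G$ minus its isolated vertices, since each isolated vertex is itself a singleton maximal clique of $G$ --- the inequality you need is that the number of isolated vertices is at least the number of maximal cliques of size at least two, which is precisely what your common-prey argument delivers.
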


The proof of the theorem is given in the next subsection. We also have the analogous conjecture (with one direction proven by Proposition~\ref{propindsg}):

\begin{conjecture}
	Let graph $G \in C(\Sn)$. Then, $G \in C(\Sn(123))$ if and only if $K_{1,3}$ is not an induced subgraph in $G$.
\end{conjecture}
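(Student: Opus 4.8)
The statement has two directions. The forward direction — if $G \in C(\Sn(123))$ then $G$ has no induced $K_{1,3}$ — is exactly the contrapositive of Proposition~\ref{propindsg}(i) for $n \ge 7$, and the finitely many small cases $n \le 6$ can be verified directly (in the spirit of Lemma~\ref{lembasecase7}). All of the content is in the converse: given $G \in C(\Sn)$ with no induced $K_{1,3}$, I must exhibit some $\sigma \in \Sn(123)$ with $C(\sigma) \cong G$. My plan is to build $\sigma$ explicitly from an interval representation of $G$ rather than to argue by induction.

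First I would reduce to a clean combinatorial target. By the corollary that every $C(\pi)$ is an interval graph, $G$ is an interval graph, and since it has no induced $K_{1,3}$ it is in fact a \emph{proper} (unit) interval graph, by the classical characterization of proper interval graphs as precisely the $K_{1,3}$-free interval graphs. Let $H$ be the subgraph induced by the non-isolated vertices of $G$, write $h := |V(H)|$, and fix a proper interval representation of $H$ by intervals $[a_r,b_r]$ for $1 \le r \le h$, with all $2h$ endpoints distinct and the vertices ordered so that $a_1 < a_2 < \cdots < a_h$; properness forces $b_1 < b_2 < \cdots < b_h$ as well, and then $w_r \sim w_s$ (for $r<s$) if and only if $a_s < b_r$.

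Next I would translate this into a permutation, using the structure behind Proposition~\ref{propCGedge}: in a $123$-avoiding permutation an edge between positions $i<j$ forces $\pi_i > \pi_j$ and requires a point strictly southwest of position $i$ and below $\pi_j$, so the left-to-right minima are isolated and the remaining points form a single decreasing staircase. I would therefore place $h$ \emph{upper} points $q_1,\dots,q_h$ at increasing positions with strictly decreasing values $v_r := -a_r$, and immediately before each $q_r$ insert a \emph{lower} point of value $\mu_r := -b_r$. Since $b_r$ is increasing, the values $\mu_r$ are strictly decreasing and lie below every $v_t$ with $t \le r$, so each lower point is a new running minimum — hence a left-to-right minimum and an isolated vertex — and the running minimum just before $q_r$ equals $\mu_r$. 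One then checks that for $r<s$ the points $q_r,q_s$ are adjacent in the competition graph iff $\mu_r < v_s$, i.e.\ iff $a_s < b_r$, which is exactly adjacency in $H$; the upper points decrease and so never serve as prey for one another, while the lower points, being running minima, are isolated. Finally the uppers and the lowers each form a decreasing subsequence, so $\sigma$ is a merge of two decreasing sequences and automatically avoids $123$. This realizes $H$ together with some number $\ell$ of isolated vertices.

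The one remaining issue — and the step I expect to be the genuine obstacle — is matching the length exactly to $n$. Padding \emph{upward} is easy: appending a point that is a new global minimum at the far right adds an isolated vertex without creating any edge or any $123$ pattern, so the isolated count can always be increased. The difficulty is the lower bound, namely guaranteeing that the number of isolated vertices forced by the construction does not exceed the number $n-h$ of isolated vertices of $G$. Writing $\ell_{123}(H)$ for the minimum number of lower points over all $123$-avoiding realizations and $\ell(H)$ for the minimum over \emph{all} permutation realizations, it suffices to show $\ell_{123}(H) \le \ell(H)$, since $G \in C(\Sn)$ already certifies $n-h \ge \ell(H)$. Minimizing lowers amounts to reusing a running minimum across consecutive uppers whenever their feasible threshold windows overlap, and experimentation suggests $\ell_{123}(H)$ equals the number of maximal cliques of $H$ (one prey is needed to witness each). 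The trouble is that in a \emph{general} permutation a prey vertex need not be a left-to-right minimum and so need not be isolated, which is exactly why the inequality $\ell_{123}(H) \le \ell(H)$ resists a direct argument; I suspect this efficiency comparison is the crux that keeps the statement a conjecture here. A natural alternative that would sidestep the counting entirely is \emph{surgery} on a realizing $\pi \in \Sn$: repeatedly apply local moves that destroy a $123$ pattern while fixing both the length and the competition graph. This preserves $n$ by construction, but controlling that such moves neither create nor destroy edges appears at least as delicate as the counting argument above.
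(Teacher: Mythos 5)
You should first note that the statement you were asked to prove is stated in the paper as a \emph{conjecture}: the paper proves only the forward direction (Proposition~\ref{propindsg}(i) for $n \geq 7$, with small cases checked by computer), and explicitly remarks that the converse --- ``if $K_{1,3}$ is not an induced subgraph in $G$, then $G \in C(\Sn(123))$'' --- is what remains open. Your treatment of the forward direction is correct and is the paper's own. Your program for the converse contains several correct and genuinely useful ingredients: since every $C(\pi)$ is an interval graph, a $K_{1,3}$-free $G$ is a proper interval graph (Roberts' classical characterization); in a $123$-avoiding permutation every prey vertex is a left-to-right minimum and hence isolated, and the edge endpoints must form a decreasing subsequence; and your staircase construction (upper points at values $-a_r$, a fresh running minimum $-b_r$ inserted before each) does realize any proper interval graph $H$ on $h$ vertices as $C(\sigma)$ for a $123$-avoiding $\sigma$ of length $2h$, since adjacency of $q_r, q_s$ with $r<s$ reduces to $a_s < b_r$, exactly as you compute.

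However, the proposal is not a proof, and the gap is the one you flag yourself: the isolated-vertex accounting. Because prey in a $123$-avoiding permutation are forced to be isolated, while in an arbitrary permutation (e.g.\ a $132$-avoiding one) prey may be non-isolated vertices of the connected components, a $123$-avoiding realization may a priori require more isolated vertices than the $n-h$ that $G$ supplies. Your construction uses $h$ of them, which already overshoots for $G = C(12\ldots n)$, i.e.\ $K_{n-1}$ plus a single isolated vertex (realizable in $\Sn(123)$ only because a single prey can serve an entire clique, as in $1\,n\,(n-1)\ldots 2$). So everything hinges on the inequality $\ell_{123}(H) \le \ell(H)$, or on your guess that $\ell_{123}(H)$ equals the number of maximal cliques of $H$ together with a matching lower bound for arbitrary realizations; neither is established in your write-up, and neither is established in the paper. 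Your proposal therefore correctly reduces the conjecture to a concrete extremal question about how efficiently prey can be shared, but leaves that question open --- which is precisely where the paper leaves it as well.
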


This conjecture would be resolved if one proves: ``if $K_{1,3}$ is not an induced subgraph in $G$, then $G \in C(\Sn(123))$.''

\FloatBarrier
%%% SUBSECTION
\subsection{Proof of Theorem~\ref{thm132class}}

We first define an operation for inflating a permutation with smaller permutations. Let $\pi \in \Sn$ and $\sigma^{1}, \sigma^{2}, \ldots, \sigma^{n}$ be non-empty permutations (possibly of different lengths). The \emph{inflation} of $\pi$ by $\sigma^{1}, \ldots, \sigma^{n}$, denoted by $\pi [\sigma^{1}, \sigma^{2}, \ldots, \sigma^{n}]$, is the permutation of length $\sum | \sigma^{i} |$ created by replacing $\pi_{i}$ with $\sigma^{i}$. For example, $213[132, 12, 4231] = 354129786$, as shown in Figure~\ref{figinflation}.

%%% FIGURE OF INFLATION
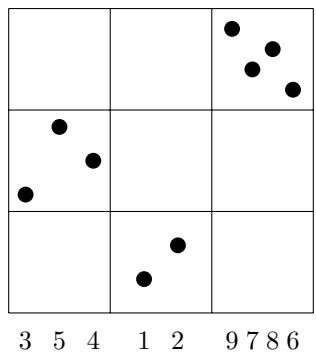
\begin{figure}[h!]
	\[\begin{tikzpicture}[scale=0.45]
		\node (p1) at (0.5,3.5) [circle,fill=black,inner sep=0pt,minimum size=6pt] {};
		\node (p2) at (1.5,5.5) [circle,fill=black,inner sep=0pt,minimum size=6pt] {};
		\node (p3) at (2.5,4.5) [circle,fill=black,inner sep=0pt,minimum size=6pt] {};
		\node (p4) at (4,1) [circle,fill=black,inner sep=0pt,minimum size=6pt] {};
		\node (p5) at (5,2) [circle,fill=black,inner sep=0pt,minimum size=6pt] {};
		\node (p6) at (6.6,8.4) [circle,fill=black,inner sep=0pt,minimum size=6pt] {};
		\node (p7) at (7.2,7.2) [circle,fill=black,inner sep=0pt,minimum size=6pt] {};
		\node (p8) at (7.8,7.8) [circle,fill=black,inner sep=0pt,minimum size=6pt] {};
		\node (p9) at (8.4,6.6) [circle,fill=black,inner sep=0pt,minimum size=6pt] {};
	
		\draw[step=3.0,black,thin] (0,0) grid (9,9);
		
		\node (lab1) at (0.5,0) [label=below:$3$] {};
		\node (lab2) at (1.5,0) [label=below:$5$] {};
		\node (lab3) at (2.5,0) [label=below:$4$] {};
		\node (lab4) at (4,0) [label=below:$1$] {};
		\node (lab5) at (5,0) [label=below:$2$] {};
		\node (lab6) at (6.6,0) [label=below:$9$] {};
		\node (lab7) at (7.2,0) [label=below:$7$] {};
		\node (lab8) at (7.8,0) [label=below:$8$] {};
		\node (lab9) at (8.4,0) [label=below:$6$] {};
	
	\end{tikzpicture}\]
	\caption{Graphical representation of $213[132, 12, 4231] = 354129786$}\label{figinflation}
\end{figure}

We begin by proving a couple of lemmas before the main result.

\begin{lemma}\label{lemconncomp}
	Let $\pi \in \Sn$ such that $G := C(\pi)$ has exactly one connected component $C_{1}$ with at least two vertices and $G$ avoids $P_{3}$ as an induced subgraph. Then, there exists a vertex $v \in C_{1}$ that is adjacent to all vertices in $C_{1} - v$.
\end{lemma}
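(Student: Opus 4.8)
The plan is to restrict attention to the induced subgraph $G[C_1]$ and pin down exactly which graph class it lies in. First I would observe that, being an induced subgraph of $G = C(\pi)$, the graph $G[C_1]$ is itself an interval graph (by the corollary to Cho and Kim's theorem that every $C(\pi)$ is an interval graph, together with the standard fact that induced subgraphs of interval graphs are interval graphs). Interval graphs are chordal, so $G[C_1]$ has no induced cycle of length four; in particular no induced $C_4$. By hypothesis $G$, and hence the induced subgraph $G[C_1]$, avoids $P_3$ as an induced subgraph, which in the paper's convention (where $P_m$ has $m+1$ vertices) means $G[C_1]$ has no induced path on four vertices. Thus $G[C_1]$ is a connected graph on at least two vertices that is simultaneously free of induced $C_4$ and induced $P_4$.

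It then suffices to prove the purely graph-theoretic statement: a connected graph $H$ on at least two vertices with no induced $C_4$ and no induced $P_4$ has a vertex adjacent to every other vertex. Since $H$ is $P_4$-free and connected with $|V(H)| \geq 2$, its complement is disconnected, so $H$ decomposes as a join $H = A \vee B$, where $A$ and $B$ are nonempty, disjoint induced subgraphs and every vertex of $A$ is adjacent to every vertex of $B$. I would then argue that at least one of $A, B$ is a clique: if $A$ contained a nonadjacent pair $a_1, a_2$ and $B$ contained a nonadjacent pair $b_1, b_2$, then $a_1, b_1, a_2, b_2$ would induce a $C_4$ (using the join edges $a_1 b_1$, $b_1 a_2$, $a_2 b_2$, $b_2 a_1$ and the nonedges $a_1 a_2$, $b_1 b_2$), contradicting $C_4$-freeness. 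Supposing $A$ is a clique, any $a \in A$ is adjacent to every other vertex of $A$ (clique) and to every vertex of $B$ (join), hence to all of $V(H) - a$. Taking $v = a$ yields the desired vertex of $C_1$ adjacent to all vertices of $C_1 - v$.

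The main obstacle is the join-decomposition step, which is the structural heart of the argument; I would supply it by invoking the standard fact that a graph both of whose parts (itself and its complement) are connected must contain an induced $P_4$ (equivalently, the cograph structure theorem), or by inserting a short self-contained proof of this well-known lemma. Everything else is bookkeeping. As an alternative more in the spirit of the paper, one could work directly on $\pi$ via Proposition~\ref{propCGedge}: two vertices are adjacent precisely when they share a common prey, so $C(\pi)$ is the intersection graph of the ``prey sets'' $L(\pi_i) = \{k : k < i,\ \pi_k < \pi_i\}$, and one could attempt to exhibit the universal vertex of $C_1$ as one whose prey set meets that of every other nonisolated vertex. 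I expect verifying universality that way to still require the forbidden-subgraph hypotheses, so the graph-theoretic route above should be the cleanest.
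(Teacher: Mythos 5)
Your proof is correct, but it takes a genuinely different route from the paper's. The paper argues directly on the permutation: it prunes ``redundant'' terms to obtain a minimal permutation $\pi^{\prime}$, orders the remaining isolated (prey) terms, and exhibits the universal vertex explicitly as a specific entry $\pi^{\prime}_{c}$, deriving a contradiction from an explicit induced $P_{3}$ on four chosen terms $\pi^{\prime}_{a}, \pi^{\prime}_{b}, \pi^{\prime}_{c}, \pi^{\prime}_{d}$ when no such entry exists. You instead discard the permutation after two observations --- the induced subgraph $G[C_{1}]$ is an interval graph (hence chordal, hence has no induced $C_{4}$), by the Cho--Kim corollary already quoted in the paper, and it has no induced path on four vertices (the paper's $P_{3}$) --- and then prove the purely graph-theoretic fact that a connected $\{P_{4}, C_{4}\}$-free graph on at least two vertices has a universal vertex, via Seinsche's theorem (a connected $P_{4}$-free graph on at least two vertices has disconnected complement, hence is a join) plus the observation that a non-edge in each join factor would force an induced $C_{4}$. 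This is exactly the classical characterization of connected trivially perfect graphs, and every ingredient you invoke is standard; the join-decomposition step you flag is indeed the only nontrivial external input and is well known. Your argument is shorter, does not use the hypothesis that $C_{1}$ is the \emph{only} nontrivial component (so it proves slightly more), and is arguably cleaner; what it gives up is the explicit identification of which term of $\pi$ realizes $v$, information the paper's proof produces but never uses downstream in the proof of Theorem~\ref{thm132class}.
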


\begin{proof}
	Given such a permutation $\pi = \pi_{1} \ldots \pi_{n}$, we will say that a term $\pi_{i}$ is \emph{redundant} in $\pi$ if the permutation $\sigma := \red(\pi - \pi_{i})$ has competition graph $C(\sigma)$ that is isomorphic to $C(\pi)$ with one isolated vertex deleted (i.e., it maintains the connected component). We construct a ``minimal'' permutation $\pi^{\prime}$ iteratively from $\pi$ as follows. Let $\sigma^{(0)} := \pi$. Now, if $\sigma^{(i)}$ contains no redundant terms, then $\pi^{\prime} := \sigma^{(i)}$. Otherwise, we define $\sigma^{(i)}_{\ast}$ as the right-most redundant term in $\sigma^{(i)}$ and define $\sigma^{(i+1)} := \red(\sigma^{(i)} - \sigma^{(i)}_{\ast})$. We note that the definition of $\pi^{\prime}$ is well-defined and any term in $\pi^{\prime}$ can be associated to its corresponding term in $\pi$. We also observe that the isolated vertices in $C(\pi^{\prime})$ will form a decreasing pattern in $\pi^{\prime}$ (if there were a $1 2$ pattern among those terms, the `$2$' term would be redundant).
	
	Let $m := |\pi^{\prime}|$ and let $\pi^{\prime}_{k}$ be the right-most term in $\pi^{\prime}$ that is an isolated vertex in $C(\pi^{\prime})$. We note that this term must induce at least one edge (and more precisely, a clique on the vertices $\pi^{\prime}_{k+1}, \ldots, \pi^{\prime}_{m}$). Let $\pi^{\prime}_{c}$ and $\pi^{\prime}_{d}$ be the largest and smallest terms out of $\pi^{\prime}_{k+1}, \ldots, \pi^{\prime}_{m}$, respectively. Let $\pi^{\prime}_{j}$ be the left-most term such that $\pi^{\prime}_{j}$ is an isolated vertex in $C(\pi^{\prime})$ and $\pi^{\prime}_{j} < \pi^{\prime}_{c}$. Note that $\pi^{\prime}_{j} > \pi^{\prime}_{d}$, otherwise $\pi^{\prime}_{k}$ is redundant.
	
	We claim that $\pi^{\prime}_{c}$ is our desired vertex. If $\pi^{\prime}_{j}$ is the left-most term in $\pi^{\prime}$ that is an isolated vertex, we are done. If $\pi^{\prime}_{j} = \pi^{\prime}_{k}$, then either it is the only isolated vertex (and we are again done) or there is another $\pi^{\prime}_{i}$ to the left of $\pi^{\prime}_{k}$ such that $\pi^{\prime}_{i} > \pi^{\prime}_{c}$, a contradiction (there would be more than one connected component).
	
%%% FIGURE FOR LEMMA
\begin{figure}[h!]
	\[\begin{tikzpicture}[scale=0.5]
		\node (p1) at (0,6) [circle,fill=black,label=left:$\pi^{\prime}_{i}$,inner sep=0pt,minimum size=6pt] {};
		\node (p2) at (3,3) [circle,fill=black,label=left:$\pi^{\prime}_{j}$,inner sep=0pt,minimum size=6pt] {};
		\node (p3) at (6,0) [circle,fill=black,label=left:$\pi^{\prime}_{k}$,inner sep=0pt,minimum size=6pt] {};
		\node (pa) at (1.5,8) [circle,fill=black,label=above:$\pi^{\prime}_{a}$,inner sep=0pt,minimum size=6pt] {};
		\node (pb) at (5,6.5) [circle,fill=black,label=above:$\pi^{\prime}_{b}$,inner sep=0pt,minimum size=6pt] {};
		\node (pc) at (7,3.5) [circle,fill=black,label=above:$\pi^{\prime}_{c}$,inner sep=0pt,minimum size=6pt] {};
		\node (pd) at (8.5,1) [circle,fill=black,label=above:$\pi^{\prime}_{d}$,inner sep=0pt,minimum size=6pt] {};
		
		\draw[dashed] (0,9)--(0,6)--(9,6);
		\draw[dashed] (3,9)--(3,3)--(9,3);
		\draw[dashed] (6,9)--(6,0)--(9,0);
		
		\draw[thick] (pa)--(pb)--(pc)--(pd);
		
	\end{tikzpicture}\]
	\caption{Structure within $\pi^{\prime}$ for Lemma~\ref{lemconncomp}}\label{figlemmaC1}
\end{figure}
	
	Now, suppose that $\pi^{\prime}_{j} \neq \pi^{\prime}_{k}$ and there is another isolated vertex to the left of $\pi^{\prime}_{j}$. Let $\pi^{\prime}_{b}$ be the largest term between $\pi^{\prime}_{j}$ and $\pi^{\prime}_{k}$. Also, let $\pi^{\prime}_{i}$ be the left-most term such that $\pi^{\prime}_{i}$ is an isolated vertex and $\pi^{\prime}_{i} < \pi^{\prime}_{b}$. Note that such a term must exist and must be to the left of $\pi^{\prime}_{j}$. Let $\pi^{\prime}_{a}$ be the largest term between $\pi^{\prime}_{i}$ and $\pi^{\prime}_{j}$. The terms $\pi^{\prime}_{a}, \pi^{\prime}_{b}, \pi^{\prime}_{c}, \pi^{\prime}_{d}$ now form an induced $P_{3}$ in $C(\pi^{\prime})$, a contradiction. Therefore, there is no $\pi^{\prime}_{i}$ to the left of $\pi^{\prime}_{j}$, so $\pi^{\prime}_{c}$ is adjacent to all vertices in its connected component.
\end{proof}

\begin{lemma}\label{lempartition}
	Let $\pi \in \Sn$, and suppose that $G := C(\pi)$ has the connected components $C_{1}, C_{2}, \ldots, C_{k}$ where $|C_{i}| > 1$ for each $i$. Then, $\pi$ can be partitioned into a collection of disjoint subsequences $L^{0}, L^{1}, \ldots, L^{k}$, with $\sigma^{i} := \red(L^{i})$ (for $0 \leq i \leq k)$, such that the following are true:
	\begin{enumerate}
		\item[(i)] $| \sigma^{0} | \geq 0$, $| \sigma^{i} | \geq 1$ (for $1 \leq i \leq k$), and $\sum |\sigma^{i}| = n$.
		\item[(ii)] $C(\sigma^{0})$ contains no edges.
		\item[(iii)] For $1 \leq i \leq k$, $C(\sigma^{i}) \cong C_{i}^{\prime}$, where $C_{i}^{\prime}$ is $C_{i}$ with $0$ or more isolated vertices.
		\item[(iv)] For each $i$ ($1 \leq i \leq k$), none of the isolated vertices in $C(\sigma^{i})$ form a $1 2$ pattern within $\sigma^{i}$.
	\end{enumerate}
\end{lemma}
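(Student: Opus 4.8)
The plan is to build the partition by deciding, for each position of $\pi$, which block it belongs to, using the connected-component structure of $C(\pi)$ together with the characterization of edges in Proposition~\ref{propCGedge}. Every non-isolated vertex lies in a unique nontrivial component $C_i$, and I would place it in $L^i$; the isolated vertices of $C(\pi)$ must then be distributed among the blocks according to whether they are needed as a common prey to realize the edges of some component, with any isolated vertex that is not needed placed in $L^0$. The crux is therefore to understand exactly which vertices can serve as the common prey witnessing an edge of a given component, so that the assignment is forced to be consistent.

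The two facts I would establish first are: (a) if $\pi_w$ is a common prey for an edge $\{\pi_u,\pi_v\}$ of $C_i$ and $\pi_w$ is itself non-isolated in $C(\pi)$, then $\pi_w \in C_i$; and (b) an isolated vertex can be a common prey for edges of at most one component. Both follow from the principle that a shared prey forces an edge. For (a), a non-isolated $\pi_w$ shares a prey $\pi_p$ with some neighbor, so $\pi_p$ lies below and to the left of $\pi_w$; since $\pi_w$ in turn lies below and to the left of $\pi_u$, the vertex $\pi_p$ is also a common prey of $\pi_w$ and $\pi_u$, forcing the edge $\{\pi_w,\pi_u\}$ and hence $\pi_w \in C_i$. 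For (b), if an isolated $\pi_w$ were a common prey for an edge of $C_i$ with endpoint $\pi_u$ and for an edge of $C_j$ with endpoint $\pi_{u'}$, then $\pi_w$ would be a common prey of $\pi_u$ and $\pi_{u'}$, forcing $\{\pi_u,\pi_{u'}\}$ and thus $C_i = C_j$.

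With these facts I would set $L^i$ to consist of the vertices of $C_i$ together with, for each edge of $C_i$, one chosen common prey (already in $C_i$ by (a) unless it is isolated, in which case it is associated only to $C_i$ by (b)), and set $L^0$ to be all remaining isolated vertices. Conditions (i) and (ii) are then immediate: the blocks partition the positions of $\pi$, each nontrivial block is nonempty, and since every vertex of $L^0$ is isolated in $C(\pi)$ and passing to a subsequence cannot create an edge (an edge of $C(\sigma^i)$ needs a witnessing prey lying inside the block, which is also a prey in $\pi$), the graph $C(\sigma^0)$ has no edges. For (iii) I would note that $C(\sigma^i)$ is a subgraph of the subgraph of $C(\pi)$ induced on the vertices of $L^i$, that every edge of $C_i$ is realized by its chosen prey, and that each chosen prey stays isolated; hence $C(\sigma^i)$ is exactly $C_i$ with some isolated vertices adjoined.

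The remaining obstacle, and the step requiring the most care, is condition (iv), since two chosen isolated prey may form a $12$ pattern in $\sigma^i$. Here I would reuse the redundancy idea from Lemma~\ref{lemconncomp}: if two isolated vertices $\pi_{w_1},\pi_{w_2} \in L^i$ satisfy $w_1 < w_2$ and $\pi_{w_1} < \pi_{w_2}$, then $\pi_{w_1}$ lies below and to the left of $\pi_{w_2}$, so every vertex preying on $\pi_{w_2}$ also preys on $\pi_{w_1}$; in particular $\pi_{w_1}$ is a common prey for every edge witnessed by $\pi_{w_2}$, so $\pi_{w_2}$ may be moved from $L^i$ to $L^0$ without destroying any edge of $C_i$ or introducing an edge into $C(\sigma^0)$. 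Iterating this removal leaves the isolated vertices of each $C(\sigma^i)$ forming a decreasing subsequence, which is precisely condition (iv). The essential thing to check throughout is consistency — that no vertex is ever claimed by two different blocks and that the cleanup never strips away an edge we need — and this is exactly what facts (a), (b), and the domination argument above guarantee.
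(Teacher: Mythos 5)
Your proposal is correct and follows essentially the same route as the paper: both assign each nontrivial component's vertices to its own block, prove that a prey witness cannot serve two distinct components nor belong to a different component than the edges it induces (your facts (a) and (b) are the paper's Claims 2 and 1, argued by forcing a single cross-edge rather than a $K_4$, which is the same principle), and then secure condition (iv) by discarding any isolated witness that is the ``2'' of a $12$ pattern, since the corresponding ``1'' dominates it and witnesses the same edges. The differences are only cosmetic (iterative pruning versus the paper's one-shot definition of $I_i^{\prime}$), and your verification of (ii)--(iv) is, if anything, more explicit than the paper's.
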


\begin{proof}
	For each $C_{j}$, let $I_{j}$ be the set of (prey) vertices inducing edges in $C_{j}$. Note that $C_{j} \cap I_{j}$ may be non-empty. We first prove two claims.\\
	
	\textit{Claim 1: if $i \neq j$, then $I_{i} \cap I_{j} = \emptyset$}.
	
	Suppose there exists $x \in I_{i} \cap I_{j}$. Let $s,t \in C_{i}$ such that $x$ induces edge $s-t$ and $u,v \in C_{j}$ such that $x$ induces edge $u-v$. Then, the term $\pi_{x}$ (associated with $x$) must be to the left of and less than the terms $\pi_{s}, \pi_{t}, \pi_{u}, \pi_{v}$ (associated with $s,t,u,v$), so $x$ induces a $K_{4}$ subgraph on those four vertices. This contradicts $C_{i}$ and $C_{j}$ being different connected components.\\
	
	\textit{Claim 2: if $i \neq j$, then $C_{i} \cap I_{j} = \emptyset$}.
	
	Suppose there exists $x \in C_{i} \cap I_{j}$. Let $s,t \in C_{j}$ such that $x$ induces edge $s-t$. Then, the term in $\pi_{x}$ must be to the left of and less than the terms $\pi_{s}, \pi_{t}$. Also, let $y \in C_{i}$ be a vertex adjacent to $x$ and suppose that edge $x-y$ is induced by vertex $v \in I_{i}$. Then, the term $\pi_{v}$ is to the left of and less than the terms $\pi_{x}, \pi_{y}, \pi_{s}, \pi_{t}$. This would induce a $K_{4}$ on the four vertices, contradicting $C_{i}$ and $C_{j}$ being different connected components.\\
	
	Now, for each $1 \leq i \leq k$, we define the set $I_{i}^{\prime} \subseteq (I_{i} \backslash C_{i})$ as
\begin{equation*}
	I_{i}^{\prime} := \{ b \in I_{i} \backslash C_{i} \; : \; \text{there does not exist } a \in I_{i} \backslash C_{i} \text{ where } a b \text{ form a } 1 2 \text{ pattern} \}.
\end{equation*}
Let $L^{i}$ be the subsequence of terms in $\pi$ associated to the vertices $C_{i} \cup I_{i}^{\prime}$, and let $\sigma^{i} := \red(L^{i})$. Also, let $L^{0}$ be the (potentially empty) subsequence of terms in $\pi$ not contained in any $L^{i}$ for $i \geq 1$, and let $\sigma^{0} := \red(L^{0})$. Observe that this gives a partitioning of $\pi$ that satisfies conditions (i)--(iv).
\end{proof}

\noindent Observe that Theorem~\ref{thm132class} can be restated as follows:\\
\textit{``Let graph $G \in C(\Sn)$. Then, $G \notin C(\Sn(132))$ if and only if $P_{3}$ is an induced subgraph in $G$.''}\\

We now prove this equivalent statement.
\begin{proof}[Proof of Theorem~\ref{thm132class}]
	One direction is already given by Proposition~\ref{propindsg}. Now suppose $\pi \in \Sn$ such that $P_{3}$ is \emph{not} an induced subgraph in $G = C(\pi)$. We would like to show that there exists $\pi^{\prime} \in \Sn(132)$ such that $G \cong C(\pi^{\prime})$. Let $C_{1}, \ldots, C_{k}$ be the connected components of $G$ with at least two vertices. If there are no such connected components (i.e., $G$ has no edges), then $\pi^{\prime} = (n) (n-1) \ldots 2 1 \in \Sn(132)$ such that $C(\pi^{\prime}) \cong G$. For later notational convenience, we let $\alpha^{i}$ denote the decreasing permutation of length $i$. We now handle $k \geq 1$ in two cases.\\
	
	\textit{Case ($k=1$)}: Let $v \in C_{1}$ such that $v$ is adjacent to all vertices in $C_{1} - v$, given by Lemma~\ref{lemconncomp}, and let $\pi_{v}$ be the term in $\pi$ associated with $v$. Let $\beta := \red(\pi - \pi_{v})$ and $G^{\prime} := C(\beta)$. Observe that $G^{\prime} \cong G - v$ since $v$ is a non-isolated vertex. Let $Q_{1}, \ldots, Q_{j}$ be the connected components of $G^{\prime}$ with at least two vertices. Note that the $Q_{i}$ components (and perhaps some isolated vertices in $G^{\prime}$) are the ``pieces'' that were disconnected from $C_{1}$ when $v$ was deleted. Let $I$ be the set of new isolated vertices created from $C_{1} - v$, and let $m := |I|$. We note that $G$ must have had at least $m$ additional isolated vertices (prey) inducing the $m$ edges that were deleted to produce $I$. 
	
	Since $G^{\prime}$ also does not contain an induced $P_{3}$ subgraph, by induction, there exists $\pi^{\prime} \in \mathcal{S}_{n-1}(132)$ such that $C(\pi^{\prime}) \cong G^{\prime}$. By Lemma~\ref{lempartition}, $\pi^{\prime}$ can be partitioned into smaller permutations $\sigma^{0}, \sigma^{1}, \ldots, \sigma^{j}$ such that $C(\sigma^{0})$ contains no edges, $C(\sigma^{i}) \cong C_{i}$ (with some isolated vertices) for each $1 \leq i \leq j$, $C(\sigma^{i})$ does not contain any isolated vertices forming a $12$ pattern in $\sigma^{i}$ for each $1 \leq i \leq j$, and $C(\sigma^{0}) \cup C(\sigma^{1}) \cup \ldots \cup C(\sigma^{j}) \cong G^{\prime}$. Note that $|C(\sigma^{0})| \geq 2 m$. Observe that each $\sigma^{i}$ avoids $1 3 2$ since it is equivalent to a subsequence in $\pi^{\prime}$. Additionally, we replace $\sigma^{0}$ with $\tau = \alpha^{m}[1 2, 1 2, \ldots, 1 2]$ and the decreasing permutation of length $|C(\sigma^{0})| - 2 m$, which we call $\tau^{0}$. 
	
	First, observe that $\alpha^{j}[\sigma^{1}, \ldots, \sigma^{j}]$, $\tau$, and $\tau^{0}$ are all permutations avoiding $1 3 2$. Then, $\pi^{\prime \prime} := 4 2 1 3 [\tau^{0}, \alpha^{j}[\sigma^{1}, \ldots, \sigma^{j}], \tau, 1] \in \Sn(1 3 2)$, as shown in Figure~\ref{figthmpf1}. Additionally, $C(\pi^{\prime \prime}) \cong G$, as desired.\\

%%% FIGURE OF INFLATION
\begin{figure}[h!]
	\[\begin{tikzpicture}[scale=0.6]
		\draw (0,0) rectangle (8,7);
		\draw (0,6) rectangle (1,7);
		\draw (2,1) rectangle (6,5);
		\draw (2,4) rectangle (3,5);
		\draw (3,3) rectangle (4,4);
		\draw (5,1) rectangle (6,2);
		\draw (6,0) rectangle (7,1);
		\draw (7,5) rectangle (8,6);
	
		\node (p1) at (7.5,5.5) [circle,fill=black,inner sep=0pt,minimum size=5pt] {};
	
		\node (lab1) at (0.5,6.5) [label=center:$\tau^{0}$] {};
		\node (lab2) at (2.5,4.5) [label=center:$\sigma^{1}$] {};
		\node (lab3) at (3.5,3.5) [label=center:$\sigma^{2}$] {};
		\node (lab4) at (4.5,2.5) [label=center:$\ddots$] {};
		\node (lab5) at (5.5,1.5) [label=center:$\sigma^{j}$] {};
		\node (lab6) at (6.5,0.5) [label=center:$\tau$] {};
	
	\end{tikzpicture}\]
	\caption{Graphical representation of $\pi^{\prime \prime} := 4 2 1 3 [\tau^{0}, \alpha^{j}[\sigma^{1}, \ldots, \sigma^{j}], \tau, 1]$}\label{figthmpf1}
\end{figure}
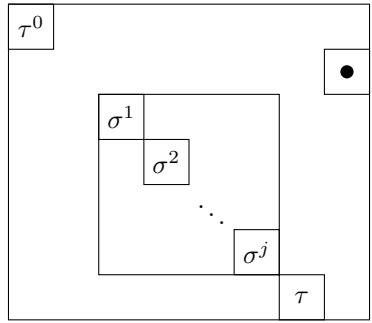
	
	\textit{Case ($k>1$)}: By Lemma~\ref{lempartition}, $\pi$ can be partitioned into smaller permutations $\sigma^{0}, \sigma^{1}, \ldots, \sigma^{k}$ such that $C(\sigma^{0})$ contains no edges, $C(\sigma^{i}) \cong C_{i}$ (with some isolated vertices) for each $1 \leq i \leq k$, and $C(\sigma^{0}) \cup C(\sigma^{1}) \cup \ldots \cup C(\sigma^{k}) \cong G$. Since each $C(\sigma^{i})$ avoids $P_{3}$ as an induced subgraph, by induction, there exists a $132$-avoiding permutation $\tau^{i}$ of the same length as $\sigma^{i}$ such that $C(\tau^{i}) \cong C(\sigma^{i})$ for each $i$. Then $\pi^{\prime} = \alpha^{k+1}[\tau^{0}, \tau^{1}, \ldots, \tau^{k}]$ avoids $132$ and $C(\pi) \cong C(\pi^{\prime})$. (See Figure~\ref{figthmpf2}.)
	
%%% FIGURE OF INFLATION
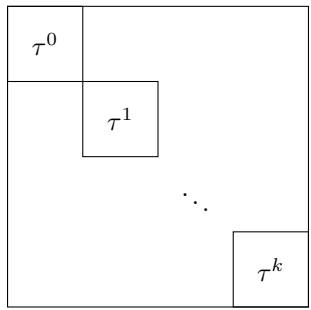
\begin{figure}[h!]
	\[\begin{tikzpicture}
		\draw (0,0) rectangle (4,4);
		\draw (0,3) rectangle (1,4);
		\draw (1,2) rectangle (2,3);
		\draw (3,0) rectangle (4,1);
	
		\node (lab1) at (0.5,3.5) [label=center:$\tau^{0}$] {};
		\node (lab2) at (1.5,2.5) [label=center:$\tau^{1}$] {};
		\node (lab3) at (2.5,1.5) [label=center:$\ddots$] {};
		\node (lab4) at (3.5,0.5) [label=center:$\tau^{k}$] {};
	
	\end{tikzpicture}\]
	\caption{Graphical representation of $\pi^{\prime} = \alpha^{k+1}[\tau^{0}, \tau^{1}, \ldots, \tau^{k}]$}\label{figthmpf2}
\end{figure}
	
\end{proof}

\FloatBarrier
%%% SECTION %%%
\section{Weighted competition graphs of permutations}\label{wcompgraphs}

In this section, we study a generalization of competition graphs which was first introduced by Sano \cite{sano}. Given a directed graph $D = (V,A)$, we define the \emph{weighted competition graph}, denoted by $W(D)$, to be the edge-weighted undirected graph $(G,w)$ such that $G = (V,E)$ is the competition graph of $D$ and the weight for edge $uv$, denoted by $w(u v)$, is the number of vertices $x$ such that both arcs $(u,x)$ and $(v,x)$ are in $D$ (i.e., the number of common prey for $u$ and $v$).

Given an edge-weighted graph $G$, we define the sets $W_{n}^{-1}(G)$ and $W_{n}^{-1}(G; \; \tau)$ as:
\begin{align*}
	W_{n}^{-1}(G) &:= \{ \pi \in \Sn \; : \; W(\pi) \text{ is isomorphic to } G \text{ with sufficiently many isolated vertices} \}.\\
	W_{n}^{-1}(G; \; \tau) &:= \{ \pi \in \Sn(\tau) \; : \; W(\pi) \text{ is isomorphic to } G \text{ with sufficiently many isolated vertices} \}.
\end{align*}
We will still focus on the path graphs $P_{m}$ and star graphs $K_{1,m}$ since these are the natural graph structures of interest for graphs arising from $123$ and $132$ avoiding permutations. In this section, references to $P_{m}$ (resp. $K_{1,m}$) will refer to the path graphs (resp. star graphs) with edge weights all equal to $1$.

%%% SUBSECTION
\subsection{Permutations producing paths and stars}

We begin our study of weighted competition graphs by considering permutations that produce paths and stars. We will show a natural bijection between $W^{-1}_{n}(P_{m})$ (permutations forming path graphs) and $W^{-1}_{n}(K_{1,m})$ (permutations forming star graphs). 

We first define some notation. Let $P_{m}$ be the path graph on the vertices $\{ p_{0}, p_{1}, \ldots, p_{m} \}$, where $p_{i}$ and $p_{i+1}$ are adjacent for $0 \leq i \leq m-1$. For $\pi \in W_{n}^{-1}(P_{m})$, we will often say ``$p_{i}$ in $\pi$'' to refer to the term in $\pi$ that $p_{i}$ corresponds to.\\

%%% Figure: path graph
\begin{figure}[h!]
	\[\begin{tikzpicture}[scale=0.8]
		\tikzset{dashedge/.style={dashed}}

		\vertex (P0) at (0,0) [label=above:$p_{0}$] {};
		\vertex (P1) at (2,0) [label=above:$p_{1}$] {};
		\vertex (P2) at (4,0) [label=above:$p_{2}$] {};
		\vertex (Pm1) at (8,0) [label=above:$p_{m-1}$] {};
		\vertex (Pm) at (10,0) [label=above:$p_{m}$] {};
	
	  \node (e1) at (1,0) [label=above:$1$] {};
		\node (e2) at (3,0) [label=above:$1$] {};
		\node (em) at (9,0) [label=above:$1$] {};
		
	\path
		
		(P0) edge (P1)
		(P1) edge (P2)
		(P2) edge [dashedge] (Pm1)
		(Pm1) edge (Pm)
		;   
	\end{tikzpicture}\]
	
	\caption{Path graph $P_{m}$}
\end{figure}
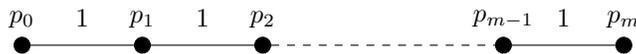

The following lemma provides insight into the arrangement of $p_{0}, \ldots, p_{m}$ within a permutation $\pi \in W_{n}^{-1}(P_{m})$.

\begin{lemma}
	\label{lemPathStruct}
	Let $\pi \in W_{n}^{-1}(P_{m})$. The $p_{i}$ labels on terms of $\pi$ can be assigned such that the following are true:
	\begin{enumerate}
		\item[(i)] $p_{i}$ is to the left of $p_{i+1}$ within $\pi$ for $0 \leq i \leq m-2$.
		\item[(ii)] $p_{m}$ is to the right of $p_{m-2}$ within $\pi$.
		\item[(iii)] If $\pi_{i_{1}}$ and $\pi_{i_{m}}$ (in $\pi$) correspond to $p_{1}$ and $p_{m}$ (in $P_{m}$), respectively, then $\pi_{i_{m}} < \pi_{i_{1}}$.
	\end{enumerate}
\end{lemma}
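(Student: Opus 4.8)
The plan is to work entirely with the geometric reformulation of edges. By Proposition~\ref{propCGedge} (together with the weight interpretation of Section~\ref{wcompgraphs}), two terms are adjacent in $W(\pi)$ exactly when some term lies strictly below and to the left of both, and the edge weight counts such common prey. For a term $t$ write $\mathrm{pos}(t)$ for its index in $\pi$ and $\mathrm{val}(t)$ for its value. Then $p_i \sim p_j$ iff there is a term $\pi_c$ with $c < \min(\mathrm{pos}(p_i),\mathrm{pos}(p_j))$ and $\pi_c < \min(\mathrm{val}(p_i),\mathrm{val}(p_j))$. Since $W(\pi)$ is $P_m$ (with unit weights) plus isolated vertices, each edge $p_i p_{i+1}$ has a common prey while each non-adjacent pair $p_i p_j$ (with $|i-j|\ge 2$) has none; existence/non-existence of a common prey is the only input I will use, so I will not even need uniqueness of the prey.

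First I would fix the labeling by proving that the path vertex of smallest position is an \emph{endpoint} of $P_m$, and declaring it to be $p_0$. If the leftmost path vertex $w$ were internal, both of its path-neighbors $u,u'$ would lie to its right. Let $r$ (resp.\ $r'$) be a common prey of the edge $wu$ (resp.\ $wu'$). Then $r$ lies left of every path vertex, in particular left of $u'$, and below $u$; since $u \not\sim u'$, the prey $r$ cannot be below-left of $u'$, forcing $\mathrm{val}(u') \le \mathrm{val}(r) < \mathrm{val}(u)$. The symmetric argument using $r'$ gives $\mathrm{val}(u) < \mathrm{val}(u')$, a contradiction. Hence $p_0$ is well defined and the traversal $p_0,p_1,\dots,p_m$ is forced.

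The core of the argument is a ``staircase'' obtained by an interlocking induction that proves two monotonicities: the positions increase, $\mathrm{pos}(p_0) < \cdots < \mathrm{pos}(p_{m-1})$, and the values decrease, $\mathrm{val}(p_1) > \cdots > \mathrm{val}(p_m)$. The engine is three short observations, each produced by exhibiting a forbidden common prey. \emph{(Two-before)} $\mathrm{pos}(p_{i+1}) > \mathrm{pos}(p_{i-1})$: otherwise the prey $s$ of $p_i p_{i+1}$ would lie left of $p_{i-1}$ and, using $\mathrm{val}(p_i) < \mathrm{val}(p_{i-1})$, below it, making $p_{i-1}\sim p_{i+1}$. \emph{(Values)} $\mathrm{val}(p_{i+1}) < \mathrm{val}(p_i)$: otherwise the prey $r$ of $p_{i-1}p_i$, which sits left of $p_{i-1}$ and hence (by two-before) left of $p_{i+1}$, would also lie below $p_{i+1}$, again giving $p_{i-1}\sim p_{i+1}$. \emph{(Positions)} $\mathrm{pos}(p_i) < \mathrm{pos}(p_{i+1})$ for $i \le m-2$: otherwise the prey $t$ of $p_{i+1}p_{i+2}$ would lie left of $p_{i+1}$, hence left of $p_i$, and, using $\mathrm{val}(p_{i+1}) < \mathrm{val}(p_i)$, below $p_i$, making $p_i \sim p_{i+2}$. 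These feed one another acyclically: a base case $\mathrm{val}(p_2) < \mathrm{val}(p_1)$ (immediate, since the prey of $p_0 p_1$ lies left of \emph{all} path vertices), then alternately (two-before)$\Rightarrow$(values) climbing the path, with (positions) deduced from (values). The decreasing values yield (iii), namely $\mathrm{val}(p_m) < \mathrm{val}(p_1)$; the position increases yield (i); and (two-before) at the top index gives (ii), namely $\mathrm{pos}(p_m) > \mathrm{pos}(p_{m-2})$.

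The main obstacle, and the reason the statement is asymmetric, is the \emph{(positions)} step: it is the only place that invokes the \emph{further} neighbor $p_{i+2}$. When $i+1 < m$ this neighbor exists and forces $\mathrm{pos}(p_i) < \mathrm{pos}(p_{i+1})$, but the terminal vertex $p_m$ has no successor $p_{m+1}$, so $p_m$ may legitimately fall to the left of $p_{m-1}$ and only the weaker bound $\mathrm{pos}(p_m) > \mathrm{pos}(p_{m-2})$ survives. I would therefore arrange the induction so that (positions) is applied only for $i \le m-2$ and $p_m$ is controlled by (two-before) alone. The rest is routine bookkeeping: handling the base index $i=1$ (where $\mathrm{val}(p_0)$ need not exceed $\mathrm{val}(p_1)$, so I lean on $p_0$ being leftmost rather than on a value comparison) and the small values of $m$.
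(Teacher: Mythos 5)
Your proof is correct, but it takes a genuinely different route from the paper's. The paper first shows that $p_{1},\ldots,p_{m-1}$ must occur \emph{consecutively} in path order (via a degree-$2$ argument on any interloping $p_{k}$ and triangle-freeness), then places $p_{0}$ to the left of $p_{1}$; it disposes of part (ii) by computationally checking that the only length-$7$ base configurations are $5736124$, $5736142$, $5637124$, $5637142$, and it delegates all $m \leq 3$ to the computer. You instead pin down $p_{0}$ as the leftmost path vertex by a clean two-prey argument showing that the leftmost path vertex must be an endpoint, and then run an interlocking induction --- values decrease along $p_{1},\ldots,p_{m}$, positions increase along $p_{0},\ldots,p_{m-1}$, and the ``two-before'' bound controls $p_{m}$ --- in which every step is a one-line exhibition of a forbidden common prey for a non-adjacent pair. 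Your version buys uniformity in $m$ (no separate small cases beyond trivialities), eliminates the computer checks entirely, correctly isolates the one asymmetric spot (the terminal vertex $p_{m}$ has no successor to force $\mathrm{pos}(p_{m-1})<\mathrm{pos}(p_{m})$, matching the base permutation $5736124$ where $p_{3}$ sits left of $p_{2}$), and in fact proves more than stated: the full value chain $\mathrm{val}(p_{1})>\cdots>\mathrm{val}(p_{m})$ rather than only $\mathrm{val}(p_{m})<\mathrm{val}(p_{1})$, which would also be reusable in the bijection of Theorem~\ref{thmPathStar}. The paper's version is shorter on the page at the cost of leaning on computation. The only bookkeeping you should make explicit in a final write-up is the treatment of $m=1$ (where (ii) is vacuous and (iii) degenerates) and the fact that your base case $\mathrm{val}(p_{2})<\mathrm{val}(p_{1})$ genuinely needs $p_{0}$ leftmost rather than a value comparison with $p_{0}$, which you already flag.
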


\begin{proof}
	\emph{Remark:} in the proof, we assume that $m \geq 4$. Cases with smaller $m$ are easily checked by computer.
	
	\textit{Satisfying condition (i):} We first prove that $p_{1}, p_{2}, \ldots, p_{m-1}$ must occur consecutively (in order) within $\pi$. Suppose that there is a $p_{k}$ that occurs between $p_{i}$ and $p_{i+1}$ within $\pi$, where none of the three vertices are $p_{0}$ or $p_{m}$. Let $x$ be the prey vertex inducing edge $p_{i} p_{i+1}$ and note that this must be to the left of and smaller in value than both vertices. Observe that $p_{k}$ must be smaller in value than $x$, otherwise $x$ would induce a triangle on the three vertices. Since $p_{k}$ has degree $2$, it has a neighbor $p_{k}^{\prime}$ which is neither $p_{i}$ nor $p_{i+1}$ as well as a prey vertex $y$ inducing the edge $p_{k} p_{k}^{\prime}$. Observe that $y$ must be to the left of $p_{k}$. But this induces a triangle on $p_{k}$, $p_{k}^{\prime}$, and whichever of $p_{i}$ or $p_{i+1}$ is to the right, a contradiction. 
	
	Therefore, the $p_{i}$'s in $\pi$ can be ordered either $p_{1} p_{2} \ldots p_{m-1}$ or $p_{m-1} p_{m-2} \ldots p_{1}$. We assume the former ordering and show that this satisfies all the other conditions. Now suppose that $p_{0}$ is to the right of $p_{1}$ in $\pi$. Let $y_{1,2}$ be the prey vertex inducing edge $p_{1} p_{2}$. The value of $p_{0}$ must be less than the value of $y_{1,2}$ in $\pi$, otherwise $y_{1,2}$ induces a triangle. Let $y_{0,1}$ be the prey vertex inducing edge $p_{0} p_{1}$. Observe that this vertex would induce a triangle on $p_{0}$, $p_{1}$, and $p_{2}$, a contradiction. Therefore, $p_{0}$ is to the left of $p_{1}$.
	
	\textit{Satisfying condition (ii):} Let prey vertices $a$, $b$, and $c$ induce edges $p_{m-3} p_{m-2}$, $p_{m-2} p_{m-1}$, and $p_{m-1} p_{m}$, respectively. Note that $a, b, c$ are distinct, otherwise they would induce a triangle. Let $\sigma$ be the length $7$ permutation order-isomorphic to the subsequence in $\pi$ corresponding to the terms $p_{m-3}$, $p_{m-2}$, $p_{m-1}$, $p_{m}$, $a$, $b$, and $c$. Note that $W(\sigma)$ should be isomorphic to $P_{3}$ with three isolated vertices. It is straightforward to computational check that the only length $7$ permutations forming this graph structure are $5736124$, $5736142$, $5637124$, and $5637142$ but none of these produce an instance where the ``$p_{m}$'' term is to the left of the ``$p_{m-2}$'' term. Therefore, $p_{m}$ is to the right of $p_{m-2}$.

	\textit{Satisfying condition (iii):} Note that $p_{m}$ is to the right of $p_{1}$ in $\pi$. If $p_{m}$'s value in $\pi$ was greater than $p_{1}$'s value in $\pi$, the prey vertex inducing edge $p_{0} p_{1}$ would also induce a triangle on $p_{0}$, $p_{1}$, and $p_{m}$, a contradiction.
\end{proof}	

The above labeling of $p_{i}$'s onto terms of $\pi$ will be considered the canonical labeling. We have a similar lemma regarding the structure of stars within a permutation. 

\begin{lemma}
	\label{lemStarStruct}
	Let $\pi \in W_{n}^{-1}(K_{1,m})$, and let $\{ a \}$ and $\{ b_{1}, \ldots, b_{m} \}$ be the bipartitions of $K_{1,m}$ such that $b_{i}$ is to the left of $b_{i+1}$ within $\pi$ for $1 \leq i \leq m-1$. The following are true:
	\begin{enumerate}
		\item[(i)] $a$ is to the right of $b_{m-1}$ within $\pi$.
		\item[(ii)] If $\pi_{\alpha}$ and $\pi_{\beta}$ (in $\pi$) correspond to $a$ and $b_{m}$ (in $K_{1,m}$), respectively, then $\pi_{\beta} < \pi_{\alpha}$.
	\end{enumerate}
\end{lemma}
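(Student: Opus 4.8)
The plan is to mirror the triangle-forcing technique from the proof of Lemma~\ref{lemPathStruct}, exploiting that in $W(\pi)$ the leaves $b_{1}, \ldots, b_{m}$ are pairwise non-adjacent while each is adjacent to the center $a$. Throughout I would invoke Proposition~\ref{propCGedge}: two vertices are joined by an edge exactly when they share a common prey, i.e.\ a term lying to the left of and smaller in value than both. As in the companion lemma, I would dispatch small $m$ by direct computer check and assume $m$ large (in particular $m \geq 2$) for the structural argument, so that $b_{m-1}$ is available.

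For part (i), I would argue by contradiction. If $a$ lies to the left of $b_{m-1}$, then since $b_{m-1}$ lies to the left of $b_{m}$ by the chosen labeling, $a$ lies to the left of both $b_{m-1}$ and $b_{m}$. Let $y$ and $z$ be prey inducing the edges $a b_{m-1}$ and $a b_{m}$ respectively; each is to the left of and smaller in value than its two endpoints. The key observation is that $y$ already sits to the left of both leaves, being left of $a$, which is left of both. Hence if $y < b_{m}$, then $y$ is a common prey of $b_{m-1}$ and $b_{m}$, forcing the forbidden edge $b_{m-1} b_{m}$; so instead $b_{m} < y < b_{m-1}$ in value. The symmetric argument applied to $z$ gives $b_{m-1} < z < b_{m}$, hence $b_{m-1} < b_{m}$, contradicting $b_{m} < b_{m-1}$. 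Therefore $a$ is to the right of $b_{m-1}$.

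For part (ii), I would again argue by contradiction, assuming the value of $b_{m}$ exceeds that of $a$. By part (i) the term $b_{m-1}$ lies to the left of $a$, and by the labeling it lies to the left of $b_{m}$. Let $y$ be a prey inducing the edge $a b_{m-1}$; then $y$ is to the left of $b_{m-1}$, hence to the left of $b_{m}$, and $y < a$. Combining with the assumption, $y < a < b_{m}$, so $y$ lies to the left of and is smaller than both $b_{m-1}$ and $b_{m}$, making it a common prey and again forcing the forbidden edge $b_{m-1} b_{m}$. This contradiction shows the value of $b_{m}$ is less than that of $a$, which is exactly $\pi_{\beta} < \pi_{\alpha}$. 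Notably no case split on the position of $b_{m}$ relative to $a$ is needed.

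The main obstacle I anticipate is not the two contradictions themselves, which are short, but the bookkeeping of ``to the left of'' versus ``smaller than'': one must confirm that the invoked prey vertices exist (guaranteed by the adjacency $a b_{i}$) and that the fabricated common prey genuinely witnesses the forbidden leaf--leaf edge. I would also note that only the adjacency pattern of the star --- each leaf adjacent to $a$, leaves mutually non-adjacent --- drives both parts; the weight-one hypothesis is not actually used here.
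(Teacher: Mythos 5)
Your proof is correct and follows essentially the same route as the paper's: in both parts the prey vertex witnessing an edge from the center $a$ to a leaf is shown to also be a common prey of two leaves, which would create the forbidden leaf--leaf edge (equivalently, a triangle) in $K_{1,m}$. The only cosmetic difference is in part (i), where the paper takes the smaller of the two prey values without loss of generality, while you run both cases and derive contradictory inequalities between $b_{m-1}$ and $b_{m}$; your closing remark that only the adjacency structure, not the unit weights, is used is also accurate.
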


\begin{proof}
	\begin{enumerate}
		\item[(i)] Suppose there are at least two $b_{i}$'s to the right of $a$ within $\pi$. Let $x$ and $y$ be the prey vertices inducing edges $\{ a, b_{m-1} \}$ and $\{ a, b_{m} \}$, respectively. Without loss of generality, suppose the value of $x$ within $\pi$ is less than the value of $y$ within $\pi$. Since $x$ is to the left of $a, b_{m-1}, b_{m}$, it induces a triangle within the competition graph, which is a contradiction.
		\item[(ii)] Suppose that $a$ is less than $b_{m}$ as terms in $\pi$. Both these terms are to the right of $b_{m-1}$, and $a$ and $b_{m-1}$ are adjacent in the competition graph. The prey vertex that induces the edge $\{ a, b_{m-1} \}$ would actually induce a triangle on $a, b_{m-1}, b_{m}$, a contradiction.
	\end{enumerate}
\end{proof}

We now show that the sets $W_{n}^{-1}(P_{m})$ and $W_{n}^{-1}(K_{1,m})$ are equinumerous.

\begin{theorem}
	\label{thmPathStar}
	For $m,n \geq 1$, $|W_{n}^{-1}(P_{m})| = |W_{n}^{-1}(K_{1,m})|$.
\end{theorem}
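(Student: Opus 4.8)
The plan is to exhibit an explicit bijection $\Phi\colon W_n^{-1}(P_m)\to W_n^{-1}(K_{1,m})$ that permutes only the values of the ``predator'' vertices and fixes everything else. First I would record the common skeleton of the two sides. Since every edge of $P_m$ (resp.\ $K_{1,m}$) has weight $1$, each edge is induced by a single common prey, and distinct edges must have distinct prey (a shared prey would be below-left of three vertices, forcing a triangle, which neither a path nor a star contains). Hence a permutation in either set splits into $m+1$ predators carrying the graph, exactly $m$ prey $y_0,\dots,y_{m-1}$, and $n-2m-1$ ``pure'' isolated vertices, and in both cases the non-predator vertices are left fixed. For the path side I would invoke the canonical labeling of Lemma~\ref{lemPathStruct} ($p_0,\dots,p_{m-1}$ left to right, $p_m$ to the right of $p_{m-2}$, and $v(p_m)<v(p_1)$, writing $v(\cdot)$ for the value of a term) and extract the value inequalities forced on the predators and prey: from the requirement that $y_i$ induce \emph{only} the edge $p_ip_{i+1}$ one gets $v(p_{i+2})<v(y_i)<v(p_{i+1})$ (so in particular $v(p_0)>v(p_2)>v(p_3)>\cdots>v(p_{m-1})>v(p_m)$, together with $v(p_1)>v(p_2)$), and each $y_i$ sits positionally between $p_{i-1}$ and $p_i$.

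Next I would define the map: keep all $n$ positions, and keep the values of $p_0$, of every prey, and of every pure-isolated vertex; then \emph{cyclically left-shift} the values of the remaining predators, assigning the value $v(p_{j+1})$ to the position of $p_j$ for $1\le j\le m-1$ and the value $v(p_1)$ to the position of $p_m$. The point is that this realizes the correspondence $p_{i}\mapsto b_{i+1}$ ($0\le i\le m-1$), $p_m\mapsto a$, and the three clauses of the two structural lemmas match up perfectly: the inequalities above make the leaf values $v(p_0)>v(p_2)>\cdots>v(p_m)$ strictly decreasing along the fixed left-to-right positions $p_0,\dots,p_{m-1}$ (the canonical decreasing leaf order of Lemma~\ref{lemStarStruct}); clause~(ii) of Lemma~\ref{lemPathStruct} ($p_m$ right of $p_{m-2}$) becomes clause~(i) of Lemma~\ref{lemStarStruct} ($a$ right of $b_{m-1}$); and clause~(iii) ($v(p_m)<v(p_1)$) becomes clause~(ii) ($v(b_m)<v(a)$, i.e.\ $v(p_m)<v(a)=v(p_1)$).

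The hard part will be verifying that $\Phi(\pi)$ really has weighted competition graph $K_{1,m}$ with the \emph{same} isolated vertices and all weights equal to $1$; this incidence bookkeeping is the technical core. The key computation is that after the shift each prey $y_i$ is below-left of exactly the new center $p_m$ (which receives the large value $v(p_1)$, exceeding every prey value and lying to the right of every prey) and exactly one leaf. Indeed, writing $v_{\mathrm{new}}(p_j)=v(p_{j+1})$ for $1\le j\le m-1$, the leaves strictly to the right of $y_i$ are $p_i,\dots,p_{m-1}$, and $v(y_i)<v(p_{i+1})=v_{\mathrm{new}}(p_i)$ while $v(y_i)>v(p_{i+2})=v_{\mathrm{new}}(p_{i+1})>v_{\mathrm{new}}(p_{i+2})>\cdots$, so $y_i$ is below-left of the single leaf $p_i$; the cases $i=0$ (using $v(y_0)>v(p_2)$) and $i=m-1$ (using $v(y_{m-1})<v(p_m)$) are checked the same way. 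Thus $y_i$ induces precisely the edge $p_i\,p_m$, giving a star of weight-$1$ edges. I would also rule out spurious common prey: no pure-isolated vertex lies below-left of two predators (it induced no edge before, and its low neighbors are untouched), and no leaf $p_{i'}$ lies below-left of a second leaf, since any leaf to its right carries a strictly smaller new value. This is exactly where a naive ``swap'' of values fails for $m\ge 4$ by creating a weight-$2$ edge, so the cyclic shift (rather than a transposition) is essential.

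Finally I would establish invertibility by running the same argument in reverse: given $\sigma\in W_n^{-1}(K_{1,m})$ with its canonical labeling from Lemma~\ref{lemStarStruct}, fix $b_1$ and the non-predator vertices and \emph{cyclically right-shift} the values of $b_2,\dots,b_m,a$, sending $a$'s value back to the position of $b_2$. Tracing the inequalities shows this lands in the canonical path form of Lemma~\ref{lemPathStruct} and is a two-sided inverse of $\Phi$; both maps visibly preserve $n$. Since Lemmas~\ref{lemPathStruct} and~\ref{lemStarStruct} supply canonical labelings for \emph{every} member of the two sets, $\Phi$ is defined on all of $W_n^{-1}(P_m)$, whence $|W_n^{-1}(P_m)|=|W_n^{-1}(K_{1,m})|$.
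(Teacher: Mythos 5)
Your overall strategy is sound, and it is genuinely different from the paper's: the paper defines a local operator $T$ that swaps $p_{1}$ with the path vertex immediately to its right, applies it $m-2$ times, and proves by induction on $k$ that $T^{k}(\pi)$ realizes a ``broom'' (a star centered at $p_{1}$ with a path attached), with each inductive step verified by reducing to a computer check of the length-$5$ patterns $34152$ and $35142$ (and $34125$, $35124$ at the last step). Your single global cyclic shift of the predator values, checked directly against the inequalities $v(p_{i+2})<v(y_{i})<v(p_{i+1})$, avoids both the induction and the computer verification, and it is in fact a different bijection (on $5736142$ the paper's map produces $5734162$, yours produces $5734126$; both lie in $B(K_{1,3})$). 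Your structural inequalities are correctly derived, and your key computation that each $y_{i}$ is below-left of the new center and of exactly one leaf among the predators is correct, including the endpoint cases $i=0$ and $i=m-1$.

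However, the ``no spurious common prey'' step --- which you yourself flag as the technical core --- has a genuine gap, in two places. First, your justification for pure isolated vertices (``it induced no edge before, and its low neighbors are untouched'') does not work: what matters is the set of vertices above and to the \emph{right} of such a vertex $z$, and that set \emph{does} change under your shift, because the leftmost predator $p_{j}$ to the right of $z$ trades its value $v(p_{j})$ for a smaller one while the position of $p_{m}$ acquires the large value $v(p_{1})$. If $v(p_{j})<v(z)<v(p_{1})$ were possible, $z$ would gain a second dominator in $\Phi(\pi)$ and induce a new edge. You need to rule this out, e.g.\ by a triangle argument: if $v(z)>v(p_{j})$, then the prey $y_{j-1}$, which lies to the left of $p_{j-1}$ and has value below $v(p_{j})$, would be below-left of all of $z$, $p_{j-1}$, $p_{j}$, forcing a triangle in $C(\pi)$, which is impossible since $C(\pi)$ is $P_{m}$ plus isolated vertices. (A similar case analysis is needed when $p_{m}$ precedes $p_{m-1}$ positionally.) Second, every leaf position $p_{j}$ with $1\le j\le m-1$ has its value strictly \emph{decreased} to $v(p_{j+1})$, so vertices to its right can newly dominate it; you exclude only other leaves as new dominators, but a prey vertex or a pure isolated vertex lying above-right of leaf $p_{j}$ in $\Phi(\pi)$ would, together with the center, make that leaf a common prey and create a spurious edge (or raise an edge weight above $1$). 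Prey are easy to exclude ($v(y_{k})<v(p_{k+1})\le v(p_{j+2})<v(p_{j+1})$ for $k>j$), but pure isolated vertices again require the triangle bound above, plus the bound $v(z)<\min_{i}v(y_{i})$ when $z$ lies to the right of all predators. With these triangle-freeness arguments supplied (and the routine observation that nothing lies below-left of any $y_{i}$, so no $y_{i}$ becomes an edge endpoint), your bijection is complete; without them, the claim that $W(\Phi(\pi))\cong K_{1,m}$ with unit weights is unsupported at exactly the step where a naive value rearrangement could fail.
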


\begin{proof}
	We will prove this by establishing a bijection from $W_{n}^{-1}(P_{m})$ to $W_{n}^{-1}(K_{1,m})$. First, we may assume that $m \geq 3$ (otherwise, $P_{m} \cong K_{1,m}$ so this is trivially true). Additionally, we may assume that $n \geq 7$, since it is easy to verify by computer that smaller $n$ will not produce $P_{3}$ or $K_{1,3}$. Now consider an arbitrary $\pi = \pi_{1} \ldots \pi_{n} \in W_{n}^{-1}(P_{m})$. Let $\pi_{i_{j}}$ (in $\pi$) correspond to vertex $p_{j}$ (in $P_{m}$). We define the operator $T$ on $\pi$ to swap $p_{1}$ with the $p_{i}$ immediately to its right within $\pi$. Observe that $T^{k}$ on $\pi$ will be a cyclic left shift of $p_{1}, \ldots, p_{k+1}$ within $\pi$. More precisely, $T^{k}(\pi)$ is the permutation formed by replacing $\pi_{i_{j}}$ with $\pi_{i_{j+1}}$ for $1 \leq j \leq k$, replacing $\pi_{i_{k+1}}$ with $\pi_{i_{1}}$, and keeping all other terms of $\pi$ fixed. We will show that $T^{m-2}$ is a bijection from $W_{n}^{-1}(P_{m})$ to $W_{n}^{-1}(K_{1,m})$.
	
	Let $\pi^{\prime} = T^{k}(\pi)$. We claim that $W_{n}(\pi^{\prime})$ is the graph shown below in Figure~\ref{figpstar}.
	
%%% Figure: star-path graph
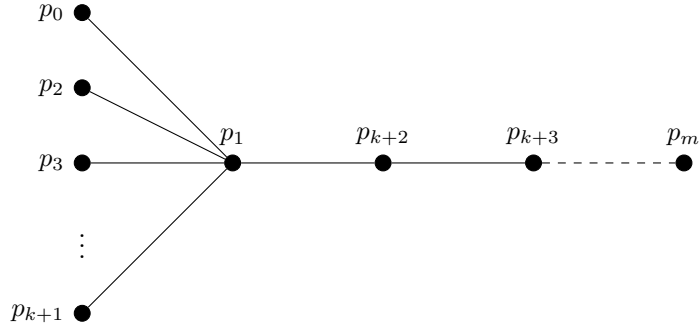
\begin{figure}[h!]
	\[\begin{tikzpicture}
		\tikzset{dashedge/.style={dashed}}

		\vertex (P0) at (0,4) [label=left:$p_{0}$] {};
		\vertex (P1) at (2,2) [label=above:$p_{1}$] {};
		\vertex (P2) at (0,3) [label=left:$p_{2}$] {};
		\vertex (P3) at (0,2) [label=left:$p_{3}$] {};
		\node (pdots) at (0,1) {$\vdots$};
		\vertex (Pk1) at (0,0) [label=left:$p_{k+1}$] {};
		\vertex (Pk2) at (4,2) [label=above:$p_{k+2}$] {};
		\vertex (Pk3) at (6,2) [label=above:$p_{k+3}$] {};
		\vertex (Pm) at (8,2) [label=above:$p_{m}$] {};
	
	\path
		
		(P0) edge (P1)
		(P2) edge (P1)
		(P3) edge (P1)
		(Pk1) edge (P1)
		(P1) edge (Pk2)
		(Pk2) edge (Pk3)
		(Pk3) edge [dashedge] (Pm)
		;   
	\end{tikzpicture}\]
	\caption{The graph $W_{n}(\pi^{\prime})$ (with isolated vertices not drawn).}\label{figpstar}
\end{figure}
	
	We prove the claim by induction on $k$. The base case $k = 0$ is trivially true since $\pi = T^{0}(\pi)$. Suppose the claim holds up to $k$, and let $\pi^{\prime} = \pi^{\prime}_{1} \ldots \pi^{\prime}_{n} := T^{k}(\pi)$. Also, let $G$ be the weighted competition graph $W(\pi^{\prime})$. Note that the operator $T$ applied to $\pi^{\prime}$ would swap the terms corresponding to $p_{1}$ and $p_{k+2}$ in $\pi^{\prime}$.
	
	Observe that $p_{1}$, $p_{k+2}$, and $p_{k+3}$ form a path of length $2$ in $G$. Let $x$ be the prey vertex in $D(\pi^{\prime})$ inducing the edge $\{ p_{1}, p_{k+2} \}$ in $G$, and let $y$ be the prey vertex inducing edge $\{ p_{k+2}, p_{k+3} \}$. Observe that $x \neq y$, otherwise that vertex would induce a triangle in $G$. Since all edge weights when edges exist are $1$, these are the unique such vertices. Let $\sigma$ be the length $5$ permutation that is order-isomorphic to the subsequence of $\pi$ corresponding to the terms $p_{1}$, $p_{k+2}$, $p_{k+3}$, $x$, and $y$. It is straightforward to computationally verify that $\sigma$ is either $3 4 1 5 2$ or $3 5 1 4 2$ if $k < m-2$. In both cases, the left to right order of the vertices are $x$, $p_{1}$, $y$, $p_{k+2}$, and $p_{k+3}$. By swapping the $p_{1}$ and $p_{k+2}$ terms, a direct verification on the two length $5$ permutations shows that: edge $\{ p_{1}, p_{k+2} \}$ is preserved, edge $\{ p_{k+2}, p_{k+3} \}$ is destroyed, and edge $\{ p_{1}, p_{k+3} \}$ is created.
	
	By Lemma~\ref{lemPathStruct}, all $p_{j}$ for $2 \leq j \leq k+1$ are to the left of $p_{1}, p_{k+2}, p_{k+3}$ in $\pi^{\prime}$ and swapping $p_{1}$ and $p_{k+2}$ will neither create new edges nor destroy existing ones that involve these $p_{j}$'s. Similarly, the remaining $p_{j}$'s will all be to the right of $p_{1}, p_{k+2}, p_{k+3}$ and the swap will neither create nor destroy any additional edges.
	
	For the final case $k = m-2$, we have two possibilities since $p_{m}$ can be to the right or the left of $p_{m-1}$, although both will be to the right of $p_{m-2}$ (by Lemma~\ref{lemPathStruct}). If $p_{m}$ is to the right of $p_{m-1}$, the argument above holds. If $p_{m}$ is to the left of $p_{m-1}$, then within $\pi^{\prime}$, we have the subsequence formed by $x, p_{1}, y, p_{m}, p_{m-1}$ and this subsequence is order-isomorphic to either $3 4 1 2 5$ or $3 5 1 2 4$. We swap $p_{1}$ and $p_{m-1}$, and checking the analogous cases as above completes the proof of our claim.
	
	The inverse mapping of $T^{-1}$ can also be defined and $(T^{-1})^{m-2}$ would send permutations $\pi = \pi_{1} \ldots \pi_{n}$ producing stars to permutations producing paths. Let $\{ a \}$ and $\{ b_{1}, b_{2}, \ldots, b_{m} \}$ be the bipartitions of $K_{1,m}$ such that $b_{i}$ is to the left of $b_{i+1}$ within $\pi$ for each $i$. By Lemma~\ref{lemStarStruct}, $a$ and $b_{m}$ are to the right of $b_{m-1}$ and $a$ is larger than $b_{m}$ as terms in $\pi$, so given a permutation $\pi \in W_{n}^{-1}(K_{1,m})$, we can distinguish which terms of $\pi$ correspond to each $b_{i}$ and $a$. The map $T^{-1}$ swaps $a$ with the $b_{i}$ immediately to $a$'s left within $\pi$. Comparing the $T$ and $T^{-1}$ maps, the $p_{1}$ term will correspond to $a$, the $p_{0}$ term will correspond to $b_{1}$, and the $p_{i}$ term will correspond to $b_{i}$ for $2 \leq i \leq m$.

\end{proof}

\FloatBarrier
%%% SUBSECTION
\subsection{Pattern-avoiders producing paths and stars}

We now study the structure of permutations in $W^{-1}_{n}(K_{1,m}; \; 132)$. Additionally, we will show a bijection between $W^{-1}_{n}(P_{m}; \; 123)$ and $W^{-1}_{n}(K_{1,m}; \; 132)$, which will be very similar to the one in Theorem~\ref{thmPathStar}.

We first define the notion of an ``accessory vertex.'' Given $\pi \in \Sn$, we say that $\pi_{i}$ is an \emph{accessory term} if it does not contribute to any edges (as either an edge endpoint or a prey vertex) in the weighted competition graph $W(\pi)$. An isolated vertex in $W(\pi)$ corresponding to an accessory term will be called an \emph{accessory vertex}. Note that equivalently, an accessory term in $\pi$ is any term that is not part of a $123$ or $132$ pattern. Given a graph $G$, we say that a permutation in $W_{n}^{-1}(G)$ is a \emph{base permutation} if it contains no accessory terms. These permutations may be thought of as the minimal permutations that produce the desired graph.

Given a graph $G$, we define $B(G)$ to be the set of all base permutations in $\mathop{\bigcup} \limits_{n} {W_{n}^{-1}(G)}$. Given a permutation pattern $\tau$, we analogously define $B(G; \; \tau)$ to be the set of all base permutations in $\mathop{\bigcup} \limits_{n} {W_{n}^{-1}(G; \; \tau)}$. For example, $B(K_{1,3}) = \{ 5634127, 5634172, 5734126, 5734162 \}$ and $B(K_{1,3}; \; 132) = \{ 5634127 \}$.

\begin{proposition}
	For each $m$, $| B(K_{1,m}; \; 132) | = 1$ and that single base permutation is
	\begin{align*}
		\pi = (2 m - 1) (2 m) (2 m - 3) (2 m - 2) \ldots (3) (4) (1) (2) (2 m + 1).
	\end{align*}
\end{proposition}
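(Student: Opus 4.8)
The plan is to split the statement into an existence claim (the displayed permutation $\pi$ is a $132$-avoiding base permutation whose weighted competition graph is $K_{1,m}$ together with isolated vertices) and a uniqueness claim (no other base permutation has these properties). For existence, I would first read off the block structure of $\pi$: it consists of $m$ increasing pairs $(2k-1)(2k)$ arranged so that the pairs strictly decrease in value, followed by the single maximal term $2m+1$. Since any increasing subsequence of length $3$ must use the final term $2m+1$ as its largest element, and the only increasing pairs among the first $2m$ entries are the pairs themselves, the occurrences of $123$ are exactly the triples $(2k-1)(2k)(2m+1)$. By Proposition~\ref{propCGedge} this yields edges $\{2k,\,2m+1\}$, each of weight $1$ (the unique common prey being $2k-1$), so $W(\pi)$ is $K_{1,m}$ centered at $2m+1$ with leaves $2,4,\dots,2m$ and the odd values as isolated prey. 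The same block analysis shows $\pi$ avoids $132$, and since every term is the ``$1$'', ``$2$'', or ``$3$'' of one of these patterns, $\pi$ has no accessory terms, i.e.\ it is a base permutation.

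For uniqueness, I would take an arbitrary base, $132$-avoiding permutation $\sigma$ with $W(\sigma)\cong K_{1,m}$ plus isolated vertices, let $a$ be the center, and first show that $a$ is the largest value and occupies the last position. Assuming $m\ge 2$ (the cases $m=1,2$ are checked directly), I would rule out $a$ ever playing the ``$2$'' role in a $123$ pattern: if it did for some edge $\{a,b_i\}$, forcing a leaf $b_i>a$ to the right of $a$, then a second such leaf would create either a $123$ between two leaves or a $132$, so at most one leaf can exceed $a$; but combining that single ``$2$''-edge with any ``$3$''-edge produces a common prey that also links two leaves, contradicting that distinct leaves are non-adjacent. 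Hence $a$ is always the ``$3$'', so it dominates and follows every leaf, and a short argument that each prey and each leaf must precede $a$ forces $a$ to be the terminal maximum.

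With $a$ deleted, let $L$ be the prefix of length $n-1$. The next steps establish that $L$ avoids $123$ (any such pattern would be an edge not incident to the center) and $132$, that its left-to-right minima are precisely the (pure) prey, and that every other entry is a leaf having exactly one smaller entry to its left (weight $1$). Avoiding $123$ forces the non-minima to be decreasing, so $L$ interleaves two decreasing sequences. The decisive step is a sandwiching argument: a leaf whose running minimum is $v_i$ must take a value strictly between $v_i$ and the previous minimum $v_{i-1}$, so the leaves landing in each gap $(v_i,v_{i-1})$ are determined by the minima alone. Combining a telescoping count of gap sizes, the non-accessory requirement on prey (each gap must be non-empty so that every prey has a larger entry to its right), and the bijection between the $m$ edges and the $m$ prey, one concludes there are exactly $m$ minima and each gap holds exactly one value; this forces the minima to be the odd numbers, the leaves to be the even numbers, and their interleaving to be exactly that of $\pi$.

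I expect the main obstacle to be the ``center is the terminal maximum'' step: the case analysis eliminating $a$ as a ``$2$'' is where $132$-avoidance must be exploited most delicately, and where the degenerate small-$m$ cases require separate verification. Once $a$ is pinned down as the last and largest term, the remaining argument reduces to clean bookkeeping on two interleaved decreasing sequences, so I anticipate little difficulty there beyond carefully justifying the sandwiching inequalities and the gap count.
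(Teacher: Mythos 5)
Your argument is correct, but it takes a genuinely different route from the paper. The paper's uniqueness proof is a short induction on $m$ that works entirely from the right end of the permutation: it shows the value $1$ must sit at position $n-2$ (else it either induces a triangle on $\pi_{n-2},\pi_{n-1},\pi_{n}$ or is an accessory term) and the value $2$ at position $n-1$ (else the edge $\{\pi_{n-1},\pi_{n}\}$ gets weight $\geq 2$, or a $132$ appears), then deletes this last $(YP)$-pair and reduces to the unique element of $B(K_{1,m-1};132)$ --- notably without ever identifying which vertex is the center of the star. Your proof instead pins down the global structure directly: you first show the center is the terminal maximum (the ``$a$ cannot be the $2$ of a $123$'' case analysis, which is the most delicate step and genuinely needs $m\geq 2$ so that a second edge with $a$ as the ``$3$'' exists to manufacture the forbidden leaf--leaf adjacency), then observe the prefix avoids both $123$ and $132$, identify left-to-right minima with prey and non-minima with leaves, and close with the sandwiching/gap-counting argument. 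This buys you more: your analysis essentially re-derives, for the base case, the content of the paper's subsequent two lemmas on where accessory terms can sit relative to $(YP)$-pairs, so it is more self-contained and makes the canonical $YPYP\ldots YPP$ shape transparent rather than emergent from the induction. The cost is length and case analysis that the paper's right-to-left peeling avoids entirely. The steps you flag as needing care (the weight-$1$ hypothesis forcing exactly one smaller-and-earlier entry per leaf, each prey inducing exactly one edge on pain of a triangle, and hence exactly one leaf per gap) all go through, so I see no gap --- only a longer path to the same destination.
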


\begin{proof}
	It is straightforward to verify that the permutation above is a base permutation producing $K_{1,m}$. To show that it is the only such permutation, we proceed by induction on $m$. It is easy to verify that $B(K_{1,1}; \; 132) = \{ 123 \}$. Let $\pi = \pi_{1} \ldots \pi_{n} \in B(K_{1,m}; \; 132)$ for $m \geq 2$. First observe that $\pi_{n-2} = 1$. If $1$ were to the left of $\pi_{n-2}$, it would induce a triangle on $\pi_{n-2}, \pi_{n-1}, \pi_{n}$, and if $1$ were to the right of $\pi_{n-2}$, it would be an accessory term. Similarly, note that $\pi_{n-1} = 2$. If $2$ were to the left of $\pi_{n-1}$, there would be multiple edges between $\pi_{n-1}$ and $\pi_{n}$, and if it were to the right (at the $\pi_{n}$ position), it would form a $132$ pattern. Then, $\pi^{\prime} = \red(\pi_{1} \ldots \pi_{n-3} \pi_{n}) \in B(K_{1,m-1}; \; 132)$, and since there is only one possible choice of $\pi^{\prime}$, there is only one possible choice for the original $\pi$.
\end{proof}

We will write $b(K_{1,m}; \; 132)$ to be the unique permutation in $B(K_{1,m}; \; 132)$. In the following results, we will let $Y$ denote an instance of a prey vertex and $P$ denote an instance of a predator vertex (within a permutation). By this notation, $b(K_{1,m}; \; 132) = (Y P)^{m} P = Y P Y P \ldots Y P P$. We will refer to the $i$-th $Y$ and the $i$-th $P$ in $b(K_{1,m}; \; 132)$ as the $i$-th $(Y P)$-pair (so $b(K_{1,m}; \; 132)$ is $m$ $(Y P)$-pairs followed by a $P$). We will refer to the last $P$ term as $P^{*}$.

\begin{lemma}
	Let $\pi \in W_{n}^{-1}(K_{1,m}; \; 132)$. The accessory terms within $\pi$ will never occur within a $(Y P)$-pair.
\end{lemma}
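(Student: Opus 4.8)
The plan is to pin down the relative layout of the essential (non-accessory) terms of $\pi$ and then show that inserting an accessory term \emph{inside} any $(YP)$-pair is incompatible either with $132$-avoidance, with all edge weights being $1$, or with the inserted term actually being accessory.

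First I would argue that deleting every accessory term from $\pi$ and reducing yields exactly the unique base permutation $b(K_{1,m};132)$. Indeed, an accessory term takes part in no $123$ or $132$ pattern, so it is never an endpoint of an edge nor a common prey for any edge of $W(\pi)$; hence deleting it changes neither the competition graph nor any edge weight on the surviving vertices, and it creates no new pattern. The reduced permutation therefore lies in $W^{-1}(K_{1,m};132)$ and contains no accessory terms, so it is a base permutation and, by the previous proposition, equals $b(K_{1,m};132) = Y_1 P_1 Y_2 P_2 \cdots Y_m P_m P^{*}$. Consequently the essential terms of $\pi$, read left to right, carry the labels $Y_1, P_1, \ldots, Y_m, P_m, P^{*}$ in this order and with the same relative values; in particular, for each $i$, the term $Y_i$ is smaller than and to the left of both $P_i$ and $P^{*}$, we have $Y_i < P_i < P^{*}$, the term $P_i$ lies to the left of $P^{*}$, and $Y_i$ is the unique common prey realizing the weight-$1$ edge $\{P_i,P^{*}\}$.

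Next, for the contradiction, I would suppose that some accessory term $z$, of value $v$, occurs strictly between $Y_i$ and $P_i$ in $\pi$ for some $i$, and split on $v$. If $v < Y_i$, then $z$ lies to the left of and below both $P_i$ and $P^{*}$, so $z$ is a second common prey of $\{P_i,P^{*}\}$ alongside $Y_i$, forcing that edge to have weight at least $2$; equivalently $z\,P_i\,P^{*}$ is a $123$ pattern, so $z$ is not accessory. If $Y_i < v < P^{*}$, then $Y_i\,z\,P^{*}$ is increasing, i.e.\ a $123$ pattern with $z$ as its middle term, so again $z$ participates in a pattern (and $\{z,P^{*}\}$ would be an edge of $C(\pi)$), contradicting that $z$ is accessory. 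Finally, if $v > P^{*}$, then reading $Y_i, z, P_i$ from left to right, the middle term $z$ has the largest value and $P_i$ the median value, giving an occurrence of $132$ and contradicting $\pi \in \Sn(132)$. Since $v$ cannot equal any essential value, these three cases are exhaustive, so no accessory term can lie between $Y_i$ and $P_i$ for any $i$.

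I expect the only genuine subtlety to be the first step: certifying that removing accessory terms leaves the weighted competition graph on the essential vertices untouched, and hence recovers the unique base permutation, which is what licenses the clean labels $Y_i, P_i, P^{*}$ together with their fixed relative positions and values. Once that layout is fixed, the case analysis on $v$ is routine, and each branch terminates immediately in one of the three prohibited configurations (weight $\geq 2$, a new edge at $z$, or a $132$ pattern).
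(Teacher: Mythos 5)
Your proof is correct and follows essentially the same route as the paper's: assume an accessory term sits inside the $i$-th $(YP)$-pair and derive a contradiction by cases on its value relative to $Y_i$ (it either becomes a second common prey of $\{P_i,P^{*}\}$, becomes an edge endpoint, or produces a forbidden $132$). The one addition is that you explicitly justify the labeling of the essential terms by showing they reduce to the unique base permutation, a step the paper leaves implicit in its $(YP)$-pair notation.
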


\begin{proof}
	Suppose on the contrary that accessory term $x$ occurs between the $i$-th $Y$ and the $i$-th $P$ terms (say $y_{i}$ and $p_{i}$). If $y_{i} < x$ (as terms in $\pi$), then $y_{i}$ would induce an edge on $x$ and $p_{i}$, and if $y_{i} > x$, then $x$ would induce an extra edge on $p_{i}$ and $P^{*}$. Both cases contradict $x$ being an accessory term.
\end{proof}

\begin{lemma}
	\label{lemYP2}
	Let $\pi \in W_{n}^{-1}(K_{1,m}; \; 132)$. For each $1 \leq i \leq m-1$, the accessory terms between the $i$-th $(Y P)$-pair and the $(i+1)$-th $(Y P)$-pair must be less than the $i$-th $(Y P)$-pair, greater than the $(i+1)$-th $(Y P)$-pair, and in decreasing order.
\end{lemma}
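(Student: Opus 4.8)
The plan is to fix an index $i$ with $1 \le i \le m-1$ and an accessory term $x$ lying (positionally) between the $i$-th and $(i+1)$-th $(YP)$-pairs, and to verify the three asserted properties in turn. Throughout I would invoke two facts established earlier. First, the non-accessory terms of $\pi$ form an occurrence of the unique base permutation $b(K_{1,m};132)$, so by value they satisfy $Y_m < P_m < Y_{m-1} < P_{m-1} < \cdots < Y_1 < P_1 < P^{*}$, with $P^{*}$ the rightmost non-accessory term of $\pi$. Second, by the preceding lemma every accessory term occurs strictly between two consecutive pairs, so $x$ sits to the right of $P_i$ and to the left of $Y_{i+1}$.

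First I would prove the upper bound $x < Y_i$ by contradiction. If $x > Y_i$, there are two subcases. If $Y_i < x < P_i$, then the terms $Y_i, P_i, x$ appear in that positional order with $Y_i < x < P_i$, an occurrence of $132$, which is impossible since $\pi$ avoids $132$. If instead $x > P_i$, then $Y_i < P_i < x$ with positions increasing, an occurrence of $123$; but then $x$ participates in a $123$ pattern, contradicting that $x$ is accessory. Hence $x < Y_i$, so $x$ lies below both terms of the $i$-th pair.

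Next I would prove the lower bound $x > P_{i+1}$; this is the step that genuinely uses the \emph{weighted} structure and is, I expect, the crux of the argument. The edge $\{P_{i+1}, P^{*}\}$ already exists in $W(\pi)$ with weight $1$, induced by the common prey $Y_{i+1}$, which lies left of and below both $P_{i+1}$ and $P^{*}$. Suppose for contradiction that $x < P_{i+1}$. Since $x$ is positioned left of $Y_{i+1}$ and hence left of $P_{i+1}$, and $x < P_{i+1}$, the vertex $P_{i+1}$ preys on $x$; and since $x < P_{i+1} < P^{*}$ with $P^{*}$ to the right of $x$, the vertex $P^{*}$ also preys on $x$. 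Thus $x$ is a \emph{second} common prey of $P_{i+1}$ and $P^{*}$, forcing $w(\{P_{i+1}, P^{*}\}) \ge 2$ and contradicting that $W(\pi)$ is a copy of $K_{1,m}$ with all edge weights equal to $1$. Therefore $x > P_{i+1}$, so $x$ exceeds both terms of the $(i+1)$-th pair.

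Finally I would establish the decreasing order. Suppose two accessory terms $x_a$ and $x_b$ in this gap, with $x_a$ left of $x_b$, satisfied $x_a < x_b$. By the upper bound both are $< Y_i < P^{*}$, and $P^{*}$ lies to the right of both, so $x_a, x_b, P^{*}$ occur with increasing values and increasing positions, an occurrence of $123$ containing the accessory term $x_a$, a contradiction. Hence the accessory terms in the gap strictly decrease from left to right. The main obstacle is the lower-bound step: the insight that an accessory term below $P_{i+1}$ does not merely avoid a pattern obstruction but silently doubles the weight of the existing spoke $\{P_{i+1},P^{*}\}$, which is exactly where the hypothesis that all edge weights equal $1$ must be used.
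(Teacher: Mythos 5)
Your proposal is correct and takes essentially the same route as the paper: the lower bound via the observation that an accessory term below $p_{i+1}$ would be a second common prey of $p_{i+1}$ and $P^{*}$, doubling that edge's weight, and the decreasing-order argument via $P^{*}$ are exactly the paper's. The only cosmetic difference is in the upper bound, where the paper argues uniformly that $y_{i} < a_{j}$ would make $y_{i}$ induce an edge on $a_{j}$ and $P^{*}$, whereas you split into a $132$-pattern subcase and a $123$-pattern subcase; both yield the same conclusion.
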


\begin{proof}
	Let accessory terms $a_{1}, a_{2}, \ldots, a_{k}$ be between the $i$-th $(Y P)$-pair (given by terms $y_{i}, p_{i}$) and the $(i+1)$-th $(Y P)$-pair (given by terms $y_{i+1}, p_{i+1}$) such that $a_{j}$ is to the left of $a_{j+1}$ for each $j$. Note that $y_{i} < p_{i}$ and $y_{i+1} < p_{i+1}$. If $y_{i} < a_{j}$, then $y_{i}$ induces an edge on $a_{j}$ and $P^{*}$, which contradicts $a_{j}$ being an accessory term. Therefore, each $a_{j}$ is less than the $i$-th $(Y P)$-pair.
	
	Similarly, if $a_{j} < p_{i+1}$, then $a_{j}$ induces a second edge on $p_{i+1}$ and $P^{*}$. Therefore, each $a_{j}$ is greater than the $i+1$-th $(Y P)$-pair. Finally, if there exists $1 \leq j < j^{\prime} \leq k$ such that $a_{j} < a_{j^{\prime}}$, then $a_{j}$ induces an edge on $a_{j^{\prime}}$ and $P^{*}$, which contradicts $a_{j^{\prime}}$ being an accessory term. Therefore, $a_{1} > a_{2} > \ldots > a_{k}$.
\end{proof}

Now, let $h(m,n) := |W_{n}^{-1}(K_{1,m}; 1 3 2)|$. We get the following nice recurrence:

\begin{theorem}
	\label{thmrecur}
	For $m > 1$ and $n > 2$, $h(m,n) = h(m,n-1) + h(m-1,n-2)$.
\end{theorem}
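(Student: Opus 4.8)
The plan is to prove the recurrence bijectively by conditioning on a single, well-chosen gap of accessory terms. The structural backbone is already in place: by the uniqueness of $b(K_{1,m};132)$ together with the lemma that accessory terms never fall inside a $(YP)$-pair, every $\pi \in W_n^{-1}(K_{1,m};132)$ is obtained from the base permutation $Y_1 P_1 Y_2 P_2 \cdots Y_m P_m P^*$ by inserting accessory terms into the gaps between consecutive skeleton terms (as well as before $Y_1$ and after $P^*$). Lemma~\ref{lemYP2} pins down the accessory terms in each interior gap (they are decreasing and confined to a fixed value band), so, morally, a permutation in $W_n^{-1}(K_{1,m};132)$ is encoded by how many accessory terms sit in each gap. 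I would condition on the first interior gap $G_1$, the set of terms lying strictly between the first pair $(Y_1,P_1)$ and the second pair $(Y_2,P_2)$; since $m>1$, this gap is meaningful.

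If $G_1$ is non-empty, I would delete its leftmost term and reduce. Deleting an accessory term removes only an isolated vertex and changes no edge weight, and it cannot create a $132$ pattern, so the result lies in $W_{n-1}^{-1}(K_{1,m};132)$. Because Lemma~\ref{lemYP2} forces the accessory terms of $G_1$ to be decreasing inside the band $(P_2,Y_1)$, re-inserting a single term at the top of that band is the unique inverse operation; hence this deletion is a bijection from $\{\pi : G_1 \neq \emptyset\}$ onto all of $W_{n-1}^{-1}(K_{1,m};132)$, accounting for the summand $h(m,n-1)$.

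If $G_1$ is empty, I would instead delete the entire first pair $Y_1 P_1$ and reduce. Since $Y_1$ is the prey inducing the single edge $\{P_1,P^*\}$ and $P_1$ is a leaf of the star, removing both collapses $K_{1,m}$ to $K_{1,m-1}$ (still centered at $P^*$) and preserves $132$-avoidance, so the image lands in $W_{n-2}^{-1}(K_{1,m-1};132)$. The purpose of requiring $G_1=\emptyset$ is that no accessory term lies between the old first and second pairs to be displaced, so the accessory terms occurring before $Y_1$ remain exactly the accessory terms before the new first prey. For the inverse, given $\rho \in W_{n-2}^{-1}(K_{1,m-1};132)$ I would re-insert a new first pair immediately before $\rho$'s leading pair, assigning it the largest pair-values (just below the accessory terms that precede it, and above every other pair); the emptiness of $G_1$ makes this insertion unambiguous and shows the deletion is a bijection onto $W_{n-2}^{-1}(K_{1,m-1};132)$, accounting for $h(m-1,n-2)$. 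Summing the two disjoint and exhaustive cases yields $h(m,n)=h(m,n-1)+h(m-1,n-2)$.

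I expect the main obstacle to be the careful verification in the $G_1=\emptyset$ case, specifically the inverse insertion. The terms appearing before $Y_1$ are all accessory, but they need not all be smaller than $P^*$: a term that exceeds every other value and sits at the very front is still isolated, so the accessory region before the first pair has slightly more structure than a single value band. To re-insert the first pair legitimately I must show that this leading region splits into the terms above $P^*$ and the terms in the band $(P_1,P^*)$, that a new largest pair can always be slotted just below these terms without creating a new edge or a $132$ pattern, and that the resulting permutation genuinely has an empty first gap. Establishing this placement lemma for the region before $Y_1$ (the left-end analogue of Lemma~\ref{lemYP2}) is the one truly new ingredient; once it is in hand, both deletions are manifestly inverse to explicit insertions and the recurrence follows. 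Conceptually, the entire argument is just Pascal's identity for distributing the accessory terms among the gaps, with the first gap serving as the coordinate on which we split.
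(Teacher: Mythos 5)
Your proposal is correct in outline and follows the same basic strategy as the paper: condition on what immediately follows the first $(YP)$-pair, obtaining the summand $h(m,n-1)$ when it is an accessory term (deleted and uniquely re-insertable via Lemma~\ref{lemYP2}) and $h(m-1,n-2)$ when it is the second pair. Your first case is identical to the paper's. The one substantive difference is in the second case: when the first gap is empty, the paper deletes the \emph{second} pair $Y_2P_2$ and re-inserts it immediately after the surviving first pair, with values squeezed just below that pair; you instead delete the \emph{first} pair $Y_1P_1$. The paper's choice is the slicker one, because it leaves the prefix of $\pi$ (everything to the left of $Y_1$) completely untouched, so the re-insertion is governed entirely by Lemma~\ref{lemYP2} applied to the gap that becomes the new second gap, and no new structural lemma is needed. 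Your choice forces exactly the complication you flag: you must analyze the accessory terms preceding $Y_1$, which do not form a single decreasing band --- they split into terms above $P^{*}$ (which may even contain an ascent, e.g.\ the prefix $4\,5$ of $45123\in W_5^{-1}(K_{1,1};132)$) followed by a decreasing run in the band between $P_1$ and $P^{*}$. Your "placement lemma" for this region is true (every such term must exceed $P_1$, so a new top pair can always be slotted in value between $\rho$'s first pair and everything preceding it, at the position immediately before $\rho$'s first pair), but proving it is genuine extra work that the paper's version of the split avoids entirely. So: same recurrence mechanism, same use of Lemma~\ref{lemYP2}, but you should either supply the prefix-structure lemma you identified or switch to deleting the second pair, which makes that lemma unnecessary.
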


\begin{proof}
	First, for $n=1$ or $n=2$, $h(m,n) = 0$ if $m \geq 1$. Also, if $m=1$ and $n \geq 2$, then $h(m,n) = (n-2) 2^{n-3}$ by \cite{robertson}. Consider $\pi \in W_{n}^{-1}(K_{1,m}; 1 3 2)$ for $m > 1$ and $n > 2$. Recall that $\pi$ will have $m$ $(Y P)$-pairs. Suppose that the first $(Y P)$-pair in $\pi$ is given by $\pi_{a} \pi_{a+1}$. If $\pi_{a+2}$ is an accessory term, then $\pi^{\prime} = \red(\pi_{1} \ldots \pi_{a+1} \pi_{a+3} \ldots \pi_{n})$ is a permutation in $W_{n-1}^{-1}(K_{1,m}; 1 3 2)$, and since there is a unique way to re-insert $\pi_{a+2}$ (by Lemma~\ref{lemYP2}), the number of such original permutations $\pi$ is $| W_{n-1}^{-1}(K_{1,m}; 1 3 2) |$. On the other hand, if $\pi_{a+2}$ is part of the $2$nd $(Y P)$-pair, then $\pi^{\prime \prime} = \red(\pi_{1} \ldots \pi_{a+1} \pi_{a+4} \ldots \pi_{n})$ is a permutation in $W_{n-2}^{-1}(K_{1,m-1};  1 3 2)$, and there is a unique way to re-insert the $(Y P)$-pair given by $\pi_{a+2} \pi_{a+3}$, so the theorem follows.
\end{proof}

Finally, we prove the result analogous to Theorem~\ref{thmPathStar} also by a bijection.

\begin{theorem}
	\label{thmPathStar2}
	For $m,n \geq 1$, $|W_{n}^{-1}(P_{m}; 1 2 3)| = |W_{n}^{-1}(K_{1,m}; 1 3 2)|$.
\end{theorem}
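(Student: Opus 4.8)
The plan is to mirror the bijective strategy of Theorem~\ref{thmPathStar}, but now restricted to the pattern-avoiding setting. The key conceptual point is that avoiding $123$ in a permutation producing a path should correspond (under essentially the same shift operator $T$) to avoiding $132$ in a permutation producing a star. So I would first establish a structural lemma, analogous to Lemma~\ref{lemPathStruct}, describing the arrangement of the path vertices $p_0, p_1, \ldots, p_m$ within a $123$-avoiding permutation $\pi \in W_n^{-1}(P_m; 123)$. The expectation is that the canonical labeling persists: $p_1, \ldots, p_{m-1}$ occur consecutively (in order), $p_0$ sits to the left of $p_1$, and the relevant value/position relations from Lemma~\ref{lemPathStruct} carry over, but now with the added constraint that no three of these terms (together with their prey vertices) form an increasing $123$ pattern. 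I would pin down exactly which small permutations realize $P_3$ (and the local $5$-point configurations around a path vertex) while avoiding $123$, just as the original proof did via a finite computer check on length-$5$ and length-$7$ permutations.

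Next I would define $T$ exactly as in Theorem~\ref{thmPathStar}: it swaps the term labeled $p_1$ with the term immediately to its right in the canonical labeling, and $T^{m-2}$ is the candidate bijection. The central claim to re-prove is the inductive statement that $W_n(T^k(\pi))$ is the ``broom'' graph of Figure~\ref{figpstar}, with the crucial extra bookkeeping that the $123$-avoidance of $\pi$ is converted into $132$-avoidance of $T^k(\pi)$ at each step. The local analysis reduces, as before, to a handful of length-$5$ permutations: in the original proof the relevant configurations were $34152$ and $35142$; here, because we are tracking $123$-avoidance on the path side and $132$-avoidance on the star side, I expect the admissible length-$5$ patterns to be exactly those avoiding $123$, and I would verify by direct inspection that applying the swap converts each into a length-$5$ permutation avoiding $132$ while performing the same edge surgery (edge $\{p_1, p_{k+2}\}$ preserved, edge $\{p_{k+2}, p_{k+3}\}$ destroyed, edge $\{p_1, p_{k+3}\}$ created). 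The base case $k=0$ is immediate since $T^0(\pi) = \pi$.

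I would then handle the terminal step $k = m-2$ as two subcases according to whether $p_m$ lies to the right or left of $p_{m-1}$ (both to the right of $p_{m-2}$ by the structural lemma), checking the two extra length-$5$ configurations as in the original argument. Finally I would confirm that the image lands in $W_n^{-1}(K_{1,m}; 132)$ by matching the vertex labels: $p_1 \mapsto a$, $p_0 \mapsto b_1$, and $p_i \mapsto b_i$ for $2 \le i \le m$, exactly paralleling the correspondence at the end of Theorem~\ref{thmPathStar}, and that the inverse $(T^{-1})^{m-2}$ (swapping $a$ with the $b_i$ immediately to its left, using Lemma~\ref{lemStarStruct} to identify the terms) sends $132$-avoiding star permutations back to $123$-avoiding path permutations. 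The main obstacle I anticipate is verifying that $T$ actually \emph{preserves} the avoidance classes: I must be certain that the swap never introduces a $132$ pattern on the star side (and dually that the inverse never introduces a $123$ on the path side), including patterns involving the accessory/prey vertices and the far-away $p_j$'s, not merely the five active terms. Establishing that the swap is ``local enough'' — that the $p_j$ for $2 \le j \le k+1$ all sit to the left and the remaining $p_j$ all sit to the right, so no long-range pattern can be created — is the delicate part, and it is precisely where the structural lemma must be invoked in full strength.
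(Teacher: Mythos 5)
There is a genuine gap: the map you propose is not the right one. You take $T^{m-2}$ \emph{exactly} as in Theorem~\ref{thmPathStar} and hope that restricting it to $W_{n}^{-1}(P_{m};123)$ lands in $W_{n}^{-1}(K_{1,m};132)$, but it does not. Concretely, $\pi = 5736142 \in W_{7}^{-1}(P_{3};123)$ has $p_{0},p_{1},p_{2},p_{3}$ equal to the terms $7,6,4,2$; applying $T$ (swap $p_{1}$ with $p_{2}$) gives $5734162$, which does produce $K_{1,3}$ but contains the $132$ occurrence $1\,6\,2$, so the image is not $132$-avoiding. The obstruction is structural: in a $132$-avoiding star permutation the center $a$ must be the \emph{last} and largest of the star terms (cf.\ the $(YP)^{m}P$ form of $b(K_{1,m};132)$), whereas $T^{m-2}$ leaves the old $p_{m}$ to the right of the new center. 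The paper's proof therefore uses a different map $M$: a full left cyclic shift on all of $p_{0},p_{1},\ldots,p_{m}$, which sends the old $p_{0}$ to the far-right position $i_{m}$ and makes it the center of the star (on the example, $M(5736142)=5634127$, the unique base permutation in $B(K_{1,3};132)$). This relies on first observing that $123$-avoidance forces $\pi_{i_{0}} > \pi_{i_{1}} > \cdots > \pi_{i_{m}}$, a fact your sketch does not isolate; $M$ is then analyzed as a $p_{0}\leftrightarrow p_{1}$ swap followed by iterates of $T$, with the unique local configuration being $35142$ (not ``all $123$-avoiding length-$5$ patterns'').

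Your anticipated ``delicate part'' --- checking that the swap never creates a $132$ involving accessory terms or far-away $p_{j}$'s --- is also handled quite differently in the paper, by a global counting argument rather than locality: the image produces a weighted $K_{1,m}$ with all edge weights $1$, so the total number of $123$ and $132$ occurrences is exactly $m$; since the star structure already accounts for $m$ occurrences of $123$, there can be no occurrence of $132$ anywhere. That trick sidesteps the case analysis you were worried about, but it only works for the correct map $M$; with $T^{m-2}$ the count is the same yet one of the $m$ occurrences is a $132$, as the counterexample shows.
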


\begin{proof}
	We prove this by establishing a bijection from $W_{n}^{-1}(P_{m}; 1 2 3)$ to $W_{n}^{-1}(K_{1,m}; 1 3 2)$. We may assume that $n \geq 3$ (otherwise, the competition graph cannot have any edges). Now consider an arbitrary $\pi = \pi_{1} \ldots \pi_{n} \in W_{n}^{-1}(P_{m}; 123)$, and let $\pi_{i_{j}}$ (in $\pi$) correspond to vertex $p_{j}$ (in $P_{m}$). First, observe that $\pi_{i_{0}} > \pi_{i_{1}} > \ldots > \pi_{i_{m}}$. If there exists $\pi_{i_{a}} < \pi_{i_{b}}$ with $a < b$, then these two together with a prey vertex of $\pi_{i_{a}}$ would form a $123$ pattern.
	
	We now define the map $M$ as a left cyclic shift on $p_{0}, p_{1}, \ldots, p_{m}$ while holding all other terms of $\pi$ fixed. More precisely, $M(\pi)$ is the permutation obtained by replacing $\pi_{i_{j}}$ with $\pi_{i_{j+1}}$ for $0 \leq j \leq m-1$ and replacing $\pi_{i_{m}}$ with $\pi_{i_{0}}$. An example of the mapping is given in Figure~\ref{figthmbijection}.\\
	
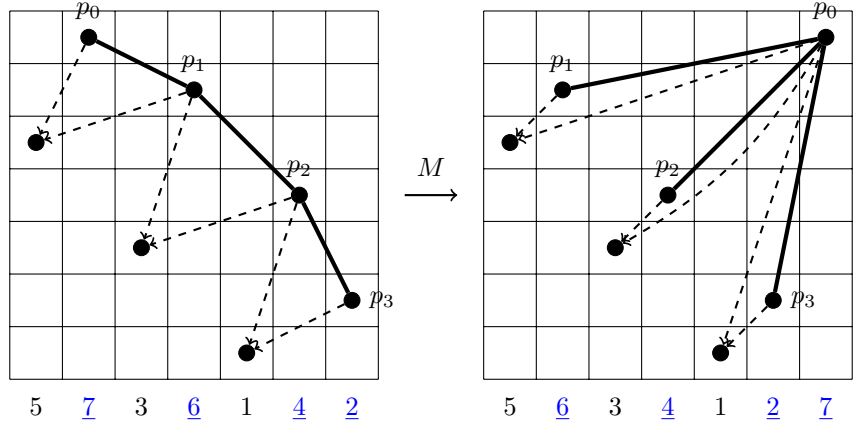
\begin{figure}[h!]
	\[\begin{tikzpicture}[scale=0.7]
		\draw[step=1.0,black,thin] (0,0) grid (7,7);
		\draw[step=1.0,black,thin] (9,0) grid (16,7);
	
		%% First permutation
		\vertex (p1) at (0.5,4.5) [circle,fill=black,inner sep=0pt,minimum size=6pt] {};
		\vertex (p2) at (1.5,6.5) [circle,fill=black,label=above:$p_{0}$,inner sep=0pt,minimum size=6pt] {};
		\vertex (p3) at (2.5,2.5) [circle,fill=black,inner sep=0pt,minimum size=6pt] {};
		\vertex (p4) at (3.5,5.5) [circle,fill=black,label=above:$p_{1}$,inner sep=0pt,minimum size=6pt] {};
		\vertex (p5) at (4.5,0.5) [circle,fill=black,inner sep=0pt,minimum size=6pt] {};
		\vertex (p6) at (5.5,3.5) [circle,fill=black,label=above:$p_{2}$,inner sep=0pt,minimum size=6pt] {};
		\vertex (p7) at (6.5,1.5) [circle,fill=black,label=right:$p_{3}$,inner sep=0pt,minimum size=6pt] {};
	
		\draw[ultra thick] (p2) -- (p4);
		\draw[ultra thick] (p4) -- (p6);
		\draw[ultra thick] (p6) -- (p7);
	
		\draw[->,dashed,thick] (p2) -- (p1);
		\draw[->,dashed,thick] (p4) -- (p1);
		\draw[->,dashed,thick] (p4) -- (p3);
		\draw[->,dashed,thick] (p6) -- (p3);
		\draw[->,dashed,thick] (p6) -- (p5);
		\draw[->,dashed,thick] (p7) -- (p5);
	
		\node (labp1) at (0.5,0) [label=below:$5$] {};
		\node (labp2) at (1.5,0) [label=below:\color{blue}$\underline{7}$] {};
		\node (labp3) at (2.5,0) [label=below:$3$] {};
		\node (labp4) at (3.5,0) [label=below:\color{blue}$\underline{6}$] {};
		\node (labp5) at (4.5,0) [label=below:$1$] {};
		\node (labp6) at (5.5,0) [label=below:\color{blue}$\underline{4}$] {};
		\node (labp7) at (6.5,0) [label=below:\color{blue}$\underline{2}$] {};
	
		%% Second permutation
		\vertex (q1) at (9.5,4.5) [circle,fill=black,inner sep=0pt,minimum size=6pt] {};
		\vertex (q2) at (10.5,5.5) [circle,fill=black,label=above:$p_{1}$,inner sep=0pt,minimum size=6pt] {};
		\vertex (q3) at (11.5,2.5) [circle,fill=black,inner sep=0pt,minimum size=6pt] {};
		\vertex (q4) at (12.5,3.5) [circle,fill=black,label=above:$p_{2}$,inner sep=0pt,minimum size=6pt] {};
		\vertex (q5) at (13.5,0.5) [circle,fill=black,inner sep=0pt,minimum size=6pt] {};
		\vertex (q6) at (14.5,1.5) [circle,fill=black,label=right:$p_{3}$,inner sep=0pt,minimum size=6pt] {};
		\vertex (q7) at (15.5,6.5) [circle,fill=black,label=above:$p_{0}$,inner sep=0pt,minimum size=6pt] {};
	
		\draw[ultra thick] (q2) -- (q7);
		\draw[ultra thick] (q4) -- (q7);
		\draw[ultra thick] (q6) -- (q7);
	
		\draw[->,dashed,thick] (q2) -- (q1);
		\draw[->,dashed,thick] (q4) -- (q3);
		\draw[->,dashed,thick] (q6) -- (q5);
		\draw[->,dashed,thick] (q7) -- (q1);
		\draw[->,dashed,thick] (q7) to[bend left=15] (q3);
		\draw[->,dashed,thick] (q7) -- (q5);
		
		\node (labq1) at (9.5,0) [label=below:$5$] {};
		\node (labq2) at (10.5,0) [label=below:\color{blue}$\underline{6}$] {};
		\node (labq3) at (11.5,0) [label=below:$3$] {};
		\node (labq4) at (12.5,0) [label=below:\color{blue}$\underline{4}$] {};
		\node (labq5) at (13.5,0) [label=below:$1$] {};
		\node (labq6) at (14.5,0) [label=below:\color{blue}$\underline{2}$] {};
		\node (labq7) at (15.5,0) [label=below:\color{blue}$\underline{7}$] {};
	
		%% Mapping M
		\draw[->,thick] (7.5,3.5) -- (8.5,3.5);
		\node (mapM) at (8,3.5) [label=above:$M$] {};
	
	\end{tikzpicture}\]
	\caption{Example of mapping $M$ on $5736142$}\label{figthmbijection}
\end{figure}
	
	First, we observe that map $M$ is equivalent to swapping $p_{0}$ and $p_{1}$ and then applying $T^{m-1}$, the operator defined in the proof of Theorem~\ref{thmPathStar} (while viewing the $p_{0}$ in $\pi$ to be the new ``$p_{1}$'' term that gets moved). Let $x$ and $y$ be the prey vertices inducing edges $p_{0} p_{1}$ and $p_{1} p_{2}$, respectively. Let $\sigma$ be the length $5$ permutation that is order-isomorphic to the subsequence of $\pi$ corresponding to $x, y, p_{0}, p_{1}, p_{2}$. It is straightforward to computationally verify that $ \sigma = 3 5 1 4 2$ is the only possibility. Swapping the $p_{0}$ and $p_{1}$ terms gives us $3 4 1 5 2$, which is another length two path (but this time, the original $p_{0}$ is now the middle vertex). Hence, swapping $p_{0}$ and $p_{1}$ in $\pi$ gives us a path where $p_{1}$ is now the endpoint and $p_{0}$ is its only neighbor. Using the same justification as the proof of Theorem~\ref{thmPathStar}, we may apply $T^{m-2}$ to this new permutation to get a permutation $\pi^{\prime}$ that produces a $K_{1,m}$. Given the original structure of $\pi$, it is clear that $W(\pi^{\prime})$ is isomorphic to $W(T(\pi^{\prime}))$.
	
	Also, due to the original structure of $\pi$, there are at least $m$ occurrences of $1 2 3$ in $T(\pi^{\prime})$, and since $W(T(\pi^{\prime}))$ has $m$ edges, there can be no occurrences of $1 3 2$. Since $M$ has a well-defined inverse (by performing a right cyclic shift on $p_{0}, p_{1}, \ldots, p_{m}$) and the previous results of this section show that all permutations in $W_{n}^{-1}(K_{1,m}; 1 3 2)$ have the structure of permutations arising from $M(\pi)$ with $\pi \in W_{n}^{-1}(P_{m}; 123)$, $M$ is a bijection between the sets.
\end{proof}

\FloatBarrier
%%% SUBSECTION
\subsection{Some enumerative results}

Using the recurrence in Theorem~\ref{thmrecur}, we can quickly compute many values for the quantity $h(m,n)$.

\begin{table}[!h]
	\centering
  \begin{tabular}{|c|r|r|r|r|r|r|r|r|r|r|r|r|}
		\hline
		$m \backslash n$ & $1$ & $2$ & $3$ & $4$ & $5$ & $6$ & $7$ & $8$ & $9$ & $10$ & $11$ & $12$ \\ \hline
    $1$ & $0$ & $0$ & $1$ & $4$ & $12$ & $32$ & $80$ & $192$ & $448$ & $1024$ & $2304$ & $5120$ \\ \hline
    $2$ & $0$ & $0$ & $0$ & $0$ & $1$ & $5$ & $17$ & $49$ & $129$ & $321$ & $769$ & $1793$ \\ \hline
    $3$ & $0$ & $0$ & $0$ & $0$ & $0$ & $0$ & $1$ & $6$ & $23$ & $72$ & $201$ & $522$ \\ \hline
    $4$ & $0$ & $0$ & $0$ & $0$ & $0$ & $0$ & $0$ & $0$ & $1$ & $7$ & $30$ & $102$ \\ \hline
		$5$ & $0$ & $0$ & $0$ & $0$ & $0$ & $0$ & $0$ & $0$ & $0$ & $0$ & $1$ & $8$ \\ \hline
  \end{tabular}
	\caption{Some values for $h(m,n) = |W_{n}^{-1}(K_{1,m}; 1 3 2)|$.}
\end{table}

\noindent Recall that the first row is given by $h(1,n) = (n-2) 2^{n-3}$ for $n \geq 2$, since it is the number of $\pi \in \Sn(132)$ with exactly one copy of $123$.

We can also derive the generating function. For each $m$, we define 
\begin{align*}
	F_{m}(y) := \mathop{\sum} \limits_{n \geq 0} {h(m,n) y^{n}}.
\end{align*}
We also define the more general generating function
\begin{align*}
	H(x,y) := \mathop{\sum} \limits_{m,n \geq 0} {h(m,n) x^{m} y^{n}}.
\end{align*}
Note that the coefficient of $x^{m}$ in $H(x,y)$ is $F_{m}(y)$.

Since an explicit closed form is known for $h(1,n)$, it is straightforward to verify that $F_{1}(y) = y^{3}/(1 - 2 y)^{2}$. In addition, the recurrence $h(m,n) = h(m,n-1) + h(m-1,n-2)$ from Theorem~\ref{thmrecur} can be re-written as
\begin{align}
	h(m,n) = \mathop{\sum} \limits_{j=1}^{n-2} {h(m-1,j)}
\end{align}
so we get that $F_{m}(y) = F_{m-1}(y) \cdot (y^{2}/(1-y))$. Therefore, we get that
\begin{align}
	F_{m}(y) = \frac{y^{2 m + 1}}{(1 - 2 y)^{2} (1 - y)^{m-1}}.
\end{align}
Through some routine manipulations, we can also get the closed form for the bivariate rational generating function $H(x,y)$:
\begin{align}
	H(x,y) &= \mathop{\sum} \limits_{m=1}^{\infty} {F_{m}(y) x^{m}}\\
	&= \mathop{\sum} \limits_{m=1}^{\infty} {\frac{y^{2 m + 1}}{(1 - 2 y)^{2} (1 - y)^{m-1}} x^{m}}\\
	&= \frac{x y^{3} \left( 1 - y \right)}{\left( 1 - 2 y \right)^{2} \left(1 - y - x y^{2} \right)}.
\end{align}

Using the closed form expression for $F_{m}(y)$, we can also derive closed form expressions of $h(m,n)$ for fixed values of $m$. For example,
\begin{table}[!h]
	\centering
	\begin{tabular}{ll}
		$h(1,n) = (n-2) 2^{n-3}$ & , $n \geq 3$\\
		$h(2,n) = (n-5) 2^{n-4} + 1$ & , $n \geq 5$\\
		$h(3,n) = (n-8) 2^{n-5} + n - 2$ & , $n \geq 7$.
	\end{tabular}
\end{table}

We note that for smaller $m$ values, the associated sequences are known in the On-Line Encyclopedia of Integer Sequences \cite{OEIS}. For example, $h(1,n)$ is A001787, $h(2,n)$ is A000337, and $h(3,n)$ is A045618.

\FloatBarrier
%%% SECTION %%%
\section{Conclusion}\label{concl}

In this article, we considered permutations inducing competition graphs through the notion of doubly partial orders. This led to interesting structural connections between permutations and competition graphs. Since edges in the competition graph arise precisely due to $123$ or $132$ patterns in the permutation, it was natural to consider permutations restricting one of the patterns. We were able to prove a classification for graphs arising from $132$-avoiding permutations and have a nice analogous conjecture for $123$-avoiding permutations. However, many other interesting questions remain. For example, do other patterns in permutations (longer patterns or other ``types'' of patterns) have any interesting connection to competition graphs? Do any permutation statistics carry over to competition graphs in any meaningful way?

We also considered the notion of weighted competition graphs, since this better captures some internal structure of the permutation. In particular, the edges in the graph are in one-to-one correspondence with $123$ and $132$ patterns in the permutation. We consider some enumerative and structural properties for permutations inducing certain weighted graphs (namely paths and stars). Many potential avenues of investigation also remain for weighted competition graphs, and perhaps related graphs, induced by permutations.\\

\noindent \textbf{Acknowledgments}: This work was initiated during the 2014 DIMACS REU program and the second author was supported by NSF grant CNS-1263082.

\bibliography{CompGraph}{}
\bibliographystyle{plain}

\end{document}